\newtheorem{theorem}{Theorem}[section]
\newtheorem{lemma}[theorem]{Lemma}
\newtheorem{corollary}[theorem]{Corollary}
\theoremstyle{definition}
\newtheorem{definition}[theorem]{Definition}
\newtheorem{example}[theorem]{Example}
\theoremstyle{remark}
\numberwithin{equation}{section}
\begin{document}

\setcounter{page}{1}

\title[$\theta-\phi-$contraction on $\left(\alpha,\eta \right)-$complete rectangular $b-$
 metric spaces]{$\theta-\phi-$contraction on $\left(\alpha,\eta \right)-$complete rectangular $b-$%
 metric spaces}

\author[Abdelkarim Kari, Mohamed Rossafi, El Miloudi Marhrani and Mohamed Aamri]{Abdelkarim Kari$^2$, Mohamed Rossafi$^{1*}$,  El Miloudi Marhrani$^2$ {and} Mohamed Aamri$^2$  }

\address{$^{1}$Department of Mathematics, University of Ibn Tofail, B.P. 133, Kenitra, Morocco}
\email{\textcolor[rgb]{0.00,0.00,0.84}{rossafimohamed@gmail.com}}
\address{$^{2}$Laboratory of Algebra, Analysis and Applications Faculty of Sciences Ben M’Sik, HassanII University, Casablanca, Morocco}
\email{\textcolor[rgb]{0.00,0.00,0.84}{abdkrimkariprofes@gmail.com}}
\email{\textcolor[rgb]{0.00,0.00,0.84}{fsb.marhrani@gmail.com}}
\email{\textcolor[rgb]{0.00,0.00,0.84}{aamrimohamed82@gmail.com}}

\keywords{fixed point, $\theta-\phi-$contraction, $\left(\alpha ,\eta \right)-$complete rectangular $b-$%
 metric spaces.}

\date{
\newline \indent $^{*}$Corresponding author}

\begin{abstract}
In this paper, we present some fixed point results for generalized $\theta -\phi -$contraction in the framework of $\left( \alpha ,\eta \right)-$compete rectangular $b-$metric spaces. Further, we establish some fixed point theorems for this type of mappings defined on such spaces. The presented extend, generalize and improve existing results in the literature. Moreover, to support our main results we give an illustrative example. 
\end{abstract} \maketitle

\section{Introduction}
The well known Banach contraction theory is one of the methods used, which states that if $ (X,d) $ is a complete metric space and  $\ T:X\rightarrow X$ is self-mapping with contraction, then $ T $ has a unique fixed point \cite{BAN}. 
 \\In 2000, Branciari \cite{BRA} introduced the notion of generalized metric spaces, such as the triangle inequality is replaced by the inequality $d(x, y) \leq d(x,u) + d(u, v) + d(v, y)$ for all pairwise distinct points $x$, $y$, $u$, $v \in X$. Since then, several results have been proposed by many mathematicians on such spaces( see  \cite{BAR,Li,DAS,DSA,JS,KS}.\\
 The concept of metric space, as an ambient space in fixed point theory, has been generalized in several directions. Such as, $b- $ metric spaces \cite{CZ} and generalized metric spaces.\\
Combining conditions used for definitions of b-metric and generalized metric spaces. Roshan et al \cite{RO} announced the notion of rectangular b-metric space.\\
Hussain and et al \cite{HU} introduced the concept of $ \alpha-\eta- $complete rectangular $ b- $metric space and proved certain results of fixed point theory on such spaces.\\
In this paper, we provide some fixed point results for generalized $\theta -\phi -$contraction in the framework of $\left( \alpha ,\eta \right)-$compete rectangular $b-$metric spaces, also we give two examples to support our results.
  
\section{Preliminaries}
\begin{definition}
\cite{RO}. Let $X$ be a nonempty set, $s\geq 1$ be a given real number, and let
d: $X\times X\rightarrow \left[ 0,+\infty \right[ $
be a mapping such that for all $x,y$ $\in X$ and all distinct points $u,v\in
X,$ each  distinct from $x$ and $y$:\\
\item[1.] $d\left(x, y\right) =0,$ if only if $x=y;$\\
\item[2.] $d\left(x, y\right) =d\left(y, x\right);$\\
\item[3.] $d\left(x, y\right) \leq $ $s\left[d\left( x,u\right)+d\left(u, v\right) +d\left(v, y\right) \right] $ $\left( b-rectan gular\ inequality \right) .$\\

Then $\left(X, d\right) $ is called a $b-$rectangular metric space.
\end{definition}
\begin{lemma}
\cite{RO}. Let $\left(X, d\right) $ be a rectangular b-metric space.\\ 
\begin{itemize}
\item[(a)]  Suppose that sequences $\left( x_{n}\right) $ and $\left(y_{n}\right) $ in $X$ are such that $x_{n}\rightarrow x$ and $y_{n}\rightarrow y$ as $n\rightarrow \infty ,$ with $x\neq y,$ $x_{n}\neq x$ and $y_{n}\neq y$ for all $n\in \mathbb{N}.$ Then we have
\begin{equation*}
\frac{1}{s}d\left( x,y\right) \leq \lim_{n\rightarrow \infty }\inf d\left(x_{n},y_{n}\right) \leq \lim_{n\rightarrow \infty }\sup d\left(x_{n},y_{n}\right) \leq sd\left( x,y\right).\\
\end{equation*}
\item[(b)]  if $y\in X$ and $\left( x_{n}\right) $ is a Cauchy sequence in $X$ with $x_{n}\neq x_{m}$ for any $m,n\in \mathbb{N},$ $m\neq n,$converging to $x\neq y,$ then
\begin{equation*}
\frac{1}{s}d\left( x,y\right) \leq \lim_{n\rightarrow \infty }\inf d\left(x_{n},y\right) \leq \lim_{n\rightarrow \infty }\sup d\left( x_{n},y\right)\leq sd\left( x,y\right),
\end{equation*}
for all $x\in X.$\\
\end{itemize}
\end{lemma}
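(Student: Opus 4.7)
The approach is to apply the $b$-rectangular inequality in both directions to relate $d(x_n,y_n)$ (respectively $d(x_n,y)$) with $d(x,y)$, and then pass to $\liminf$ and $\limsup$. The distinctness hypotheses are calibrated precisely so that the four-point condition needed to invoke the rectangular inequality is available on a tail of the sequences.

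For part (a), I would first observe that because $x_n\to x$, $y_n\to y$, and $x\neq y$, each of the coincidences $x_n=y$, $y_n=x$, and $x_n=y_n$ can occur only finitely often: otherwise the corresponding subsequence would keep $d(x_m,x)$ or $d(y_m,y)$ bounded away from $0$, contradicting convergence. Combined with the assumed $x_n\neq x$ and $y_n\neq y$, this makes $\{x_n,y_n,x,y\}$ pairwise distinct for all large $n$. The $b$-rectangular inequality then yields simultaneously
\[
d(x_n,y_n)\le s\bigl[d(x_n,x)+d(x,y)+d(y,y_n)\bigr],\qquad d(x,y)\le s\bigl[d(x,x_n)+d(x_n,y_n)+d(y_n,y)\bigr].
\]
Taking $\limsup$ in the first, $\liminf$ in the second, and using $d(x_n,x),d(y_n,y)\to 0$, delivers both halves of the conclusion in one stroke.

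Part (b) proceeds in the same spirit, but the two intermediate vertices are drawn from the Cauchy sequence itself. Because the $x_k$ are pairwise distinct, each of $x$ and $y$ can equal at most one $x_k$, so $\{x_n,x_m,x,y\}$ is pairwise distinct for all sufficiently large $n\neq m$. For the upper bound I would apply
\[
d(x_n,y)\le s\bigl[d(x_n,x_m)+d(x_m,x)+d(x,y)\bigr],
\]
send $m\to\infty$ to kill the first two summands (using the Cauchy property and $x_m\to x$), and then take $\limsup_n$. For the lower bound, the symmetric inequality
\[
d(x,y)\le s\bigl[d(x,x_n)+d(x_n,x_{n+1})+d(x_{n+1},y)\bigr]
\]
combined with $\liminf_n$ and the Cauchy property produces $\tfrac{1}{s}d(x,y)\le\liminf_n d(x_n,y)$.

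The principal nuisance, rather than any deep obstacle, is the distinctness bookkeeping at each use of the $b$-rectangular inequality: since $d$ need not be continuous on a rectangular $b$-metric space, one cannot appeal to limit arguments for the intermediate vertices and must instead exploit directly the pairwise-distinct-terms (in (b)) and the $x_n\neq x$, $y_n\neq y$ (in (a)) hypotheses to pass to an appropriate tail where the four-point condition is automatic.
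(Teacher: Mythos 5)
The paper offers no proof of this lemma at all: it is quoted from \cite{RO} as a known result, so there is no in-paper argument to compare against. Your overall strategy --- apply the $b$-rectangular inequality once in each direction and pass to $\liminf$/$\limsup$ after securing four-point distinctness on a tail --- is exactly the standard proof of this kind of statement, and your part (b) is essentially correct. (One cosmetic point there: for fixed $n$ the Cauchy property only gives $\limsup_{m} d(x_n,x_m)\le \sup_{k,m\ge n} d(x_k,x_m)$, which tends to $0$ as $n\to\infty$ rather than for each fixed $n$; the two limits must be taken jointly, which is harmless.)

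There is, however, a genuine gap in part (a). Your claim that $x_n=y_n$ can occur only finitely often is not supported by the reason you give, and in fact cannot be derived from the stated hypotheses. The argument ``the subsequence would keep $d(x_m,x)$ or $d(y_m,y)$ bounded away from $0$'' correctly rules out $x_n=y$ and $y_n=x$ infinitely often, but if $x_n=y_n=:z_n$ nothing prevents $d(z_n,x)\to 0$ and $d(z_n,y)\to 0$ simultaneously: what you would need is uniqueness of limits for $(z_n)$, and that fails in rectangular ($b$-)metric spaces. In the classical Branciari/Sarma--Rao--Rao example there is a sequence $z_n$, with $z_n\neq x$ and $z_n\neq y$ for all $n$, converging to two distinct points $x$ and $y$; taking $x_n=y_n=z_n$ satisfies every hypothesis of (a) while $\liminf_n d(x_n,y_n)=0<\tfrac{1}{s}d(x,y)$, so the lower bound genuinely fails. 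The statement needs the extra hypothesis $x_n\neq y_n$ (which holds in the only way the lemma is used in this paper, where $x_n$ and $y_n$ are distinct terms of a single Cauchy sequence with pairwise distinct terms); once that is added, your tail-distinctness bookkeeping and the two displayed applications of the $b$-rectangular inequality do complete the proof.
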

Zheng et al \cite{ZH} introduced a new type of contractions called $\theta -\phi $-contractions in metric spaces and proved a new fixed point theorems for such mapping.\\
\begin{definition}
\cite{JA} We denote by $\Theta _{\text{ }}$ the set of functions $\theta :\left] 0,\infty \right[ \rightarrow \left] 1,\infty \right[,$\\
satisfying the following conditions:\\
\item[$\left( \theta _{1}\right) $] $\theta$ is increasing ;\\
\item[$\left( \theta _{2}\right) $]  for each sequence $\left( x_{n}\right) $ $\in \left] 0,\infty \right[ ,\lim_{n\rightarrow \infty }\theta \left(x_{n}\right) =1\Leftrightarrow $ $\lim_{n\rightarrow \infty }x_{n} =0;$\\
\item[$\left( \theta _{3}\right) $] $\theta$ is continuous on $\left] 0,\infty \right[$.\\
\end{definition}
\begin{definition}
\cite{ZH} We denote by $\Phi $ the set of functions $\phi $ : $%
\left[ 1,\infty \right[ $ $\rightarrow $ $\left[ 1,\infty \right[ $\\
satisfying the following conditions:\\
\item[$\left( \Phi _{1}\right) $] $\phi $ : $\left[ 1,\infty \right[ $ $\rightarrow \left[ 1,\infty \right[ $ is non-decreasing;\\
\item[$\left( \Phi _{2}\right) $] for each $\ t>1$, $lim_{n\rightarrow \infty}\phi ^{n}(t)=1$;\\
\item[$\left( \Phi _{3}\right) $] $\phi $ is continuous on $\left[ 1,\infty \right[ $.\\
\end{definition}
\begin{lemma}
\cite{ZH}. If $\phi \in \Phi $, then $\phi (1)=1$ and $%
\varphi (t)<t$ for each $t>1$.\\
\end{lemma}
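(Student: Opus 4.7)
The plan is to prove both claims by contradiction, in each case exploiting the interplay between monotonicity $(\Phi_{1})$ and the iterative limit condition $(\Phi_{2})$. Continuity $(\Phi_{3})$ will not actually be needed.

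First I would establish $\phi(1)=1$. Since $\phi$ maps $[1,\infty)$ into $[1,\infty)$, we automatically have $\phi(1)\geq 1$. Suppose toward a contradiction that $\phi(1)>1$. Applying $(\Phi_{2})$ to $t=\phi(1)$ yields $\lim_{n\to\infty}\phi^{n}(\phi(1))=1$, i.e.\ $\lim_{n\to\infty}\phi^{n+1}(1)=1$. On the other hand, by $(\Phi_{1})$ and the assumption $\phi(1)\geq 1$, iterating gives $\phi^{n+1}(1)\geq \phi(1)>1$ for every $n\geq 0$. Passing to the limit produces $1\geq \phi(1)>1$, a contradiction. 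Hence $\phi(1)=1$.

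Next I would show $\phi(t)<t$ for every $t>1$. Assume, toward a contradiction, that there exists some $t_{0}>1$ with $\phi(t_{0})\geq t_{0}$. By the monotonicity $(\Phi_{1})$, this yields $\phi^{2}(t_{0})=\phi(\phi(t_{0}))\geq \phi(t_{0})\geq t_{0}$, and inductively $\phi^{n}(t_{0})\geq t_{0}>1$ for all $n\in\mathbb{N}$. Taking the limit and invoking $(\Phi_{2})$ would then force $1=\lim_{n\to\infty}\phi^{n}(t_{0})\geq t_{0}>1$, the desired contradiction. Therefore $\phi(t)<t$ for every $t>1$.

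I do not anticipate any serious obstacle here: the only subtlety is realizing that the assumption $\phi(1)>1$ can be fed into $(\Phi_{2})$ with $t=\phi(1)$, because that choice of $t$ lies in the admissible range $(1,\infty)$, and that monotonicity alone suffices to keep the iterates bounded away from $1$. The proof is essentially a two-line contradiction argument in each case.
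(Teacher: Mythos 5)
Your proof is correct: both contradiction arguments are sound, and you rightly note that the hypothesis $\phi(1)>1$ is exactly what makes $t=\phi(1)$ an admissible input for $(\Phi_{2})$, while monotonicity $(\Phi_{1})$ keeps the iterates bounded below by the starting value. The paper itself states this lemma without proof, citing Zheng et al.\ \cite{ZH}, and your argument is precisely the standard one given there, so there is nothing to add.
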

In 2014 Hussain et al. \cite{H} introduced a weaker notion than the concept of completeness and called it $ \alpha $-completeness for a metric spaces. 
	\begin{definition}
\cite{H}. Let $T:X\rightarrow X$ and $\alpha,\eta $ :$X\times X\rightarrow \left[ 0,+\infty \right[.$ We say that $T$
is a triangular $\left( \alpha ,\eta \right) -$admissible mapping if
\end{definition}
$\left( T_{1}\right) $ $\alpha \left( x,y\right) \geq 1$ $\Rightarrow $ $\alpha \left( Tx,Ty\right) \geq 1,$ $x,y\in X$;\\
$\left( T_{2}\right) $ $\eta \left( x,y\right) \leq 1$ $\Rightarrow $ $\eta\left( Tx,Ty\right) \leq 1,$ $x,y\in X$;\\
\bigskip $\left( T_{3}\right) \left\{ \begin{array}{c}\alpha \left( x,y\right) \geq 1 \\ 
\alpha \left( y,z\right) \geq 1\end{array}\right. \Rightarrow \alpha \left( x,z\right) \geq 1$ for all $x,y,z\in X$;\\
$\left( T_{4}\right) \left\{ \begin{array}{c}\eta \left( x,y\right) \leq 1 \\ \eta \left( y,z\right) \leq 1\end{array}\right. \Rightarrow \eta \left( x,z\right) \leq 1$ for all $x,y,z\in X$.\\
\begin{definition}
\cite{H}. Let $\left( X,d\right) $ be a b-rectangular metric space and let $\alpha ,\eta $ :$X\times X\rightarrow \left[ 0,+\infty \right[
$ be two mappings. The space is said to be:\\ 
\item[(a)] T is $\alpha -$continuous mapping on $\left( X,d\right) ,$ if for given
point $x\in X$ and sequence $\left( x_{n}\right) $ in $X,$
$x_{n}\rightarrow x$ and $\alpha \left( x_{n},x_{n+1}\right) \geq 1$ for all $n\in \mathbb{N},$ imply that $Tx_{n}\rightarrow Tx$ $.$\\
\item[(b)] T is $\eta $ sub$-$continuous mapping on $\left( X,d\right) ,$ if for
given point $x\in X$ and\\ sequence $\left( x_{n}\right) $ in $X,$ $x_{n}\rightarrow x$ and $\eta \left( x_{n},x_{n+1}\right) \leq 1$ for all $ n\in \mathbb{N},$ imply that T$x_{n}\rightarrow Tx$$.$\\
\item[(c)] T is $\left( \alpha ,\eta \right) $ $-$continuous mapping on $\left(X,d\right) ,$ if for given point $x\in X$ and\\ sequence $\left( x_{n}\right) $ in $X,$ $x_{n}\rightarrow x$ and $\alpha \left( x_{n},x_{n+1}\right) \geq 1$ or $ \eta \left( x_{n},x_{n+1}\right) \leq 1$ for all $n\in \mathbb{N},$ imply that $Tx_{n}\rightarrow Tx$$.$
\end{definition}
The following definitions was given by Hussain et al \cite{HU}.
\begin{definition}
\cite{HU}. Let $d\left( X,d\right) $ be a rectangular b-metric space and let
 $\alpha ,\eta $ :$X\times X\rightarrow \left[ 0,+\infty \right[ $ be two mappings. The space $ X $ is said to be:\\
\item[(a)] $\alpha -$complete$,$ if every Cauchy sequence $\left( x_{n}\right) $ in $X$ with $\alpha \left( x_{n},x_{n+1}\right) \geq 1$ for all $n\in \mathbb{N},$ converges in $X.$\\
\item[(b)] $\eta -\sup -$complete$,$ if every Cauchy sequence $\left( x_{n}\right) $ in $X$ with $\eta \left( x_{n},x_{n+1}\right) \leq 1$ for all $n\in  \mathbb{N},$ converges in $X.$\\
\item[(c)] $\left( \alpha ,\eta \right)-$complete$,$ if every Cauchy sequence $ \left( x_{n}\right) $ in $X$ with $\alpha \left( x_{n},x_{n+1}\right) \geq 1 $ or $\eta \left( x_{n},x_{n+1}\right) \leq 1$ for all $n\in \mathbb{N},$ converges in $X.$\\
\end{definition}
\begin{definition}
\cite{HU}. Let $\left( X,d\right) $ be a rectangular b-metric space and let $\alpha ,\eta 
$ :$X\times X\rightarrow \left[ 0,+\infty \right[ $ be two mappings. The
space $ X $ is said to be:\\
\item[(a)] $\left( X,d\right) $ is $\alpha $ -regular, if $x_{n}\rightarrow x$,
where $\alpha \left( x_{n},x_{n+1}\right) \geq 1$ for all $n\in \mathbb{N},$ implies $\alpha \left( x_{n},x\right) \geq 1$ for all $n\in \mathbb{N}.$\\
\item[(b)]$\left( X,d\right) $ is $\eta -$sub -regular, if $x_{n}\rightarrow x$,
where $\eta \left( x_{n},x_{n+1}\right) \leq 1$ for all $n\in \mathbb{N},$ implies $\eta \left( x_{n},x\right) \leq 1$ for all $n\in \mathbb{N}.$\\
\item[(b)] $\left( X,d\right) $ is $\left( \alpha ,\eta \right) $-regular, if $x_{n}\rightarrow x$,
where  $\alpha \left( x_{n},x_{n+1}\right) \geq 1$ or $ 
\eta \left( x_{n},x_{n+1}\right) \leq 1$ for all $n\in \mathbb{N},$ imply that $\alpha \left( x_{n},x\right) \geq 1$ or $\eta \left( x_{n},x\right) \leq 1$ for all $n\in \mathbb{N}.$\\ 
\end{definition}
\section{main results}
\begin{definition}
 Let $d\left( X,d\right) $ be a $ (\alpha, \eta) $- rectangular b-metric space with parameter $s>1$ and let $T$ be a self mapping on $\ X$. Suppose that $\alpha ,\eta
:X\times X\rightarrow \left[ 0,+\infty \right[ $ are two functions. We say that $T$ is an $\left( \alpha ,\eta \right) -\theta -\phi -$contraction, if for all $x,y\in X$ with $\left( \alpha \left( x,y\right) \geq 1\text{ or }\eta \left( x,y\right) \leq 1\right) $ and $d\left( Tx,Ty\right) >0$ we have 
\begin{equation}
\theta \left( s^{2}.d\left( Tx,Ty\right) \right) \leq \phi\left[ \theta \left( \beta_{1}d\left( x,y\right) +\beta _{2}d\left( Tx,x\right) +\beta _{3}d\left(Ty,y\right) +\beta _{4}d\left( y,Tx\right) \right)\right] 
\end{equation}
where $\theta \in \Theta ,\ \phi \in \Phi$, $\beta _{i}\geq0$ for $i\in\{1,2,3,4\},$ $\sum\limits_{\substack{i=0 }}^{i=4} {\beta_i}\leq1$  and  $\beta _{3}<\frac{1}{s}.$
\end{definition}
\begin{definition}
Let $(X,d)$ be a $ (\alpha, \eta) $-complete rectangular $b-$metric space and $T:X\rightarrow X$ \ be a mapping.\\
\item[(1)]$T$ is said to be a $\theta -\phi -$
Kannan- type contraction if there exists $\theta \in \Theta $ and $\phi \in \Phi $ with $\alpha \left( x,y\right) \geq 1$ or $\eta \left(x,y\right) \leq 1$ for any $x,y\in X,$ $Tx\neq Ty$, we have\\ 
\begin{equation*}
\theta \left[ s^{2}d\left( Tx,Ty\right) \right] \leq \phi \left[ \theta\left(\frac{\left( d\left( Tx,x\right) +d\left( Ty,y\right) \right) }{2s}\right) \right].\\
\end{equation*}
\item[(2)]$T$ is said to be a $\theta -\phi -$ Reich-type contraction \ if there exists $\theta \in \Theta $ and $\phi \in \Phi $ with $\alpha \left( x,y\right) \geq 1$ or $\eta \left( x,y\right) \leq 1$ for any $x,y\in X,$ $Tx\neq Ty$, we have\\
\begin{equation*}
\theta \left[s^{2} d\left( Tx,Ty\right) \right]\leq \phi \left[ \theta \left( \frac{d\left( x,y\right) +d\left( Tx,x\right)+d\left( Ty,y\right) }{3s}\right) \right].\\
\end{equation*}
\item[(3)]$T$ is said to be a Kannan type mapping, that is, if there exists $\alpha \in \left] 0,\frac{1}{2s}\right[ $ with $\alpha \left(x,y\right) \geq 1$ or $\eta \left( x,y\right) \leq 1$ for any $x,y\in X,$ $Tx\neq Ty$, we have\\
\begin{equation*}
s^{2}.d\left( Tx,Ty\right) \leq \alpha \left( d\left( Tx,x\right) +d\left(y,Ty\right) \right).\\
\end{equation*}
\item[(4)]$T$ is said to be a Reich type mapping, that is, if there
exists $\lambda \in \left] 0,\frac{1}{3s}\right[ $ with $\alpha \left(x,y\right) \geq 1$ or $\eta \left( x,y\right) \leq 1$ for any $x,y\in X,$ $Tx\neq Ty$, we have
\begin{equation*}
 s^{2}.d\left( Tx,Ty\right)\leq \lambda\left[  d\left( x,y\right) +d\left( Tx,x\right)+d\left( Ty,y\right)\right]  .\\
\end{equation*}
\end{definition}
\begin{theorem}
Let $\left( X,d\right) $ be a $\left( \alpha ,\eta \right) -$complete rectangular b-metric and let\\ $\alpha ,\eta :X\times X\rightarrow \left[0,+\infty \right[ $ be two functions. Let $T:X\times X\rightarrow X$ be a self mapping satisfying following
conditions:\\
\item[(i)] $T$ is a triangular $\left( \alpha ,\eta \right) -$admissible mapping;\\
\item[(ii)] $T$ is an $\left( \alpha ,\eta \right)-\theta -\phi -$contraction;\\
\item[(iii)] there exists $x_{0}\in X$ such that $\alpha \left(x_{0},Tx_{0}\right) \geq 1$ or $\eta \left( x_{0},Tx_{0}\right)\leq 1;$\\
\item[(iv)] $T$ is a $\left( \alpha ,\eta \right) -$continuous.\\

Then T has a fixed point. Moreover, $T$ has a unique fixed point when $\alpha\left( x,y\right) \geq 1$ or $\eta \left( x, y\right) \leq 1$ for all $x,y\in X.$\\
\end{theorem}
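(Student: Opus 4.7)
Fix $x_{0}$ as in hypothesis (iii) and define the Picard iteration $x_{n+1}=Tx_{n}$. If $x_{n_{0}}=x_{n_{0}+1}$ for some $n_{0}$, then $x_{n_{0}}$ is the desired fixed point; otherwise we may assume $d(x_{n},x_{n+1})>0$ for every $n$. Using $(T_{1})$--$(T_{2})$ inductively along the orbit, we get $\alpha(x_{n},x_{n+1})\geq 1$ or $\eta(x_{n},x_{n+1})\leq 1$ for every $n$; then the triangular properties $(T_{3})$--$(T_{4})$ extend this to $\alpha(x_{n},x_{m})\geq 1$ or $\eta(x_{n},x_{m})\leq 1$ for all $m>n$, which is exactly what the contraction (1) requires to be applicable on any pair of iterates.

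Next, I would plug $x=x_{n-1}$, $y=x_{n}$ into (1). The right-hand side collapses because $d(y,Tx)=d(x_{n},x_{n})=0$ and the two terms $d(Tx,x)$, $d(x,y)$ are both $d(x_{n-1},x_{n})$; using $\beta_{1}+\beta_{2}+\beta_{3}+\beta_{4}\leq 1$ and $\beta_{3}<1/s$, together with monotonicity of $\theta$, one obtains
\[
\theta\bigl(s^{2}d(x_{n},x_{n+1})\bigr)\leq \phi\bigl[\theta\bigl(d(x_{n-1},x_{n})\bigr)\bigr].
\]
Iterating gives $\theta(s^{2}d(x_{n},x_{n+1}))\leq \phi^{n}[\theta(d(x_{0},x_{1}))]$, and by $(\Phi_{2})$ and $(\theta_{2})$ we conclude $d(x_{n},x_{n+1})\to 0$. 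Applying (1) with $x=x_{n-1}$, $y=x_{n+1}$ and using a similar estimate — the $d(y,Tx)=d(x_{n+1},x_{n})$ term is controlled by the already-established zero limit — yields $d(x_{n},x_{n+2})\to 0$ as well. Both limits are needed for the Cauchy step in the rectangular setting.

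To show $(x_{n})$ is Cauchy I would argue by contradiction: assume $d(x_{n_{k}},x_{m_{k}})\geq \varepsilon$ for some increasing subsequences. Splitting into even/odd gaps via the $b$-rectangular inequality and combining with Lemma~2.2, one reduces $\limsup d(x_{n_{k}},x_{m_{k}})$ to a finite quantity and feeds $(x_{n_{k}-1},x_{m_{k}-1})$ into (1); the factor $s^{2}$ on the left matches the $s$ lost on each of the two sides of Lemma~2.2, giving $\theta(\varepsilon)\leq \phi[\theta(\varepsilon)]<\theta(\varepsilon)$, a contradiction (by Lemma~2.5 and strict monotonicity). By $(\alpha,\eta)$-completeness, $x_{n}\to z$ for some $z\in X$, and $(\alpha,\eta)$-continuity of $T$ in hypothesis (iv) gives $Tz=\lim Tx_{n}=\lim x_{n+1}=z$.

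For the uniqueness clause, suppose $z\neq w$ are both fixed points with $\alpha(z,w)\geq 1$ or $\eta(z,w)\leq 1$. Applying (1) with $x=z$, $y=w$ reduces the RHS to $\theta(\beta_{1}d(z,w)+\beta_{4}d(w,z))$, and since $\beta_{1}+\beta_{4}\leq 1$ and $s^{2}\geq 1$, monotonicity of $\theta$ together with Lemma~2.5 forces $\theta(d(z,w))\leq \phi[\theta(d(z,w))]<\theta(d(z,w))$, again a contradiction. \textbf{Main obstacle.} The delicate step is the Cauchy argument: in a rectangular $b$-metric one cannot chain triangle inequalities freely, so the whole point of having $s^{2}$ (rather than $s$) on the left of (1) is to absorb two factors of $s$ coming from Lemma~2.2 when passing to $\liminf/\limsup$; getting these constants to line up, while simultaneously keeping the four auxiliary distances $d(x_{n_{k}-1},x_{m_{k}-1})$, $d(Tx,x)$, $d(Ty,y)$, $d(y,Tx)$ under control (using $\beta_{3}<1/s$ precisely so that $\beta_{3}\cdot s d(w,w)$-type terms stay below the threshold), is the technical core of the argument.
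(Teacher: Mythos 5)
Your proposal follows essentially the same route as the paper's proof: Picard iteration with admissibility propagated along the orbit by $(T_1)$--$(T_4)$, the two limits $d(x_n,x_{n+1})\to 0$ and $d(x_n,x_{n+2})\to 0$ extracted from the contraction via $(\Phi_2)$ and $(\theta_2)$, a Cauchy argument by contradiction in which the $s^{2}$ on the left of $(3.1)$ absorbs the losses from Lemma 2.2, and the same uniqueness computation using $\beta_1+\beta_4\leq 1$. The only step you should make explicit is the paper's preliminary verification that the iterates are pairwise distinct ($x_n\neq x_m$ for all $m\neq n$), which is needed before the $b$-rectangular inequality and Lemma 2.2 can legitimately be invoked.
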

\begin{proof}
 Let $x_{0}\in X$ such that  $\alpha \left(x_{0},Tx_{0}\right) \geq 1$ or $\eta \left( x_{0},Tx_{0}\right) \leq 1.$\\
Define a sequence $\left\lbrace  x_{n}\right\rbrace  $ by $x_{n}=T^{n}x_{0}=Tx_{n-1}.$ Since T is a triangular $\left( \alpha,\eta \right) -$admissible mapping, then
 $\alpha \left( x_{0},x_{1}\right) =\alpha \left( x_{0},Tx_{0}\right) \geq 1\Rightarrow \alpha \left( Tx_{0},Tx_{1}\right) \geq 1=\alpha \left( x_{1},x_{2}\right) $ or $\eta \left( x_{0},x_{1}\right) =\eta \left( x_{0},Tx_{0}\right) \leq 1\Rightarrow \alpha \left( Tx_{0},Tx_{1}\right) \leq 1=\alpha \left( x_{1},x_{2}\right)$\\
Continuing this process we have $\alpha \left( x_{n-1},x_{n}\right) \geq 1$ or $\eta \left( x_{n-1},x_{n}\right) \leq 1,$
for all $n\in \mathbb{N}.$ By $\left( T_{3}\right) $ and $\left( T_{4}\right) ,$ one has.
\begin{equation}
\alpha \left( x_{m},x_{n}\right) \geq 1\text{ or }\eta \left(x_{m},x_{n}\right) \leq 1,\text{ \ \ \ \ \ }\forall m,n\in \mathbb{N},\text{ }m\neq n. 
\end{equation}
Suppose that there exists $n_{0}$ $\in \mathbb{N}$ such that $x_{n_{0}}=Tx_{n_{0}}.$ Then $x_{n_{0}}$ is a fixed point of $T$
and the prove is finished. Hence, we assume that $x_{n}\neq Tx_{n}$,
i.e. $d\left( x_{n-1},x_{n}\right) >0$ for all $\ n\in \mathbb{N}.$ We have
\begin{equation}
x_{n}\neq x_{m},\ \forall m,n\in \mathbb{N},m\neq n. 
\end{equation}
Indeed, suppose that $x_{n}=x_{m}$ for some $m=n+k>n,$ so we have\\
$$x_{n+1}=Tx_{n}=Tx_{m}=x_{m+1}.$$ \\
Denote $ d_{m}=d\left( x_{m},x_{m+1}\right).$
Then, $\left( 3.1\right) $ and lemma $\left( {2.5}\right) $ implies that
\begin{equation*}
\theta \left( d_{n}\right) =\theta \left( d_{m}\right)\leq\theta \left(s^{2}d_{m}\right) =\theta \left( s^{2}d\left( Tx_{m-1},Tx_{m}\right) \right)\leq \phi \left( \theta \left( \beta _{1}d_{m-1}+\beta _{2}d_{m-1}+\beta_{3}d_{m}\right) \right)
\end{equation*}
\begin{equation*}
<\theta \left( \beta _{1}d_{m-1}+\beta _{2}d_{m-1}+\beta _{3}d_{m}\right).\\
\end{equation*}
As $\theta $ is increasing, so 
$$d_{n}=d_{m}<\beta _{1}d_{m-1}+\beta_{2}d_{m-1}+\beta _{3}d_{m}.$$ 
Hence 
$$d_{m}<\frac{\beta _{1}+\beta _{2}}{1-\beta _{3}}d_{m-1}.$$\\
Since $$\beta _{1}+\beta _{2}+\beta _{3}\leq 1.$$\\ Thus $$d_{m}<d_{m-1}.$$ Continuing this process, we
can prove that $d_{n}=d_{m}<d_{n},$ which is a Contradiction. Thus, in  follows, we can assume that $\left( 3.2\right) $ and $\left( 3.3\right) $ hold.\\
We shall prove that 
\begin{equation}
\lim\limits_{n \rightarrow +\infty}d\left( x_{n},x_{n+1}\right) =0\text{ \ and \ }\lim\limits_{n \rightarrow +\infty}d\left( n_{n},x_{n+2}\right) =0.  
\end{equation}
Since $T$ $\ $is  $\left( \alpha ,\eta \right) -\theta -\phi -$ contraction, we get
\begin{align*}
   \theta \left( d_{n}\right)      &  = \theta \left( d\left( Tx_{n-1},Tx_{n}\right)\right) \\
                          & \leq \theta \left( s^{2}d\left( Tx_{n-1},Tx_{n}\right) \right)\\
                          & \leq \phi \left( \theta \left( \beta _{1}d_{n-1}+\beta _{2}d_{n-1}+\beta_{3}d_{n}\right) \right)\\
                          & <\theta \left( \beta _{1}d_{n-1}+\beta_{2}d_{n-1}+\beta _{3}d_{n}\right).\\
\end{align*}
Since $\theta $ is increasing, we deduce that $d_{n}<\beta _{1}d_{n-1}+\beta
_{2}d_{n-1}+\beta _{3}d_{n},$ thus
\begin{equation}
d_{n}<\frac{\beta _{1}+\beta _{2}}{1-\beta _{3}}d_{n-1}.   
\end{equation}
Since $\frac{\beta _{1}+\beta _{2}}{1-\beta _{3}}\leq 1.$ Then\\ 
\begin{equation}
d_{n}<d_{n-1}.
\end{equation}
Therefore $ d(x{_n},x_{n+1}) $ is monotone strictly decreasing sequence of non negative real numbers. Consequently, there exists $ \alpha \geq 0 $, such that such that
$$ \lim\limits_{n \rightarrow +\infty}d\left( x_{n},x_{n+1}\right) =\alpha .$$\\ 
which again by $\left( 3.1\right), \left( 3.6\right) $ and property of $\left( \theta \right),$ we have
\begin{equation*}
1<\theta \left( d_{n}\right) \leq \phi \left( \theta \left( \beta_{1}d_{n-1}+\beta _{2}d_{n-1}+\beta _{3}d_{n}\right) \right)  
\end{equation*}
\begin{equation}
\leq\phi \left( \theta \left( d_{n-1}\right) \right) \leq \phi ^{2}\left( \theta\left( d_{n-2}\right) \right) \leq...\leq\phi ^{n}\left( \theta \left(d_{0}\right) \right) =\phi ^{n}\left( \theta \left( d\left(x_{0},x_{1}\right) \right) \right). 
\end{equation}
By taking the limit as $n$ $\rightarrow $ $\infty $ in $\left( 3.7\right) $ and using $\left( \Phi _{2}\right) $ , we have 
\begin{equation*}
1\leq \lim\limits_{n \rightarrow +\infty}\theta \left( d\left( x_{n},x_{n+1}\right)\right) \leq \phi ^{n}\left( \theta \left( d\left( x_{0},x_{1}\right)\right) \right).\\
\end{equation*}
Then $\lim\limits_{n \rightarrow +\infty}\theta \left( d\left( x_{n},x_{n+1}\right)
\right) =1$, 
by $ \Theta_2,$ we obtain 
\begin{equation}
\lim\limits_{n \rightarrow +\infty}d\left( x_{n},x_{n+1}\right) =0.  
\end{equation}
On the other hand,\\
$\theta \left( s^{2}d\left( x_{n},x_{n+2}\right) \right) \leq \phi \left[ \theta \left( \beta _{1}d\left( x_{n-1},x_{n+1}\right) +\beta _{2}d\left(x_{n-1},x_{n}\right) +\beta _{3}d\left( x_{n+1},x_{n+2}\right) +\beta_{4}d\left( x_{n+1},x_{n}\right) \right) \right]  $ \\
$\leq \phi \left[  \theta \left( \begin{array}{c}
s\beta _{1}d\left( x_{n-1},x_{n+2}\right) +s\beta _{1}d\left(x_{n+2},x_{n}\right) +s\beta _{1}d\left( x_{n},x_{n+1}\right) +\beta_{2}d\left( x_{n-1},x_{n}\right)  \\ 
+\beta _{3}d\left( x_{n+1},x_{n+2}\right) +\beta _{4}d\left(x_{n+1},x_{n}\right) \end{array}\right)\right]  $\\
 $\leq \phi \left[  \theta \left( \begin{array}{c}s^{2}\beta _{1}d\left( x_{n-1},x_{n}\right) +s^{2}\beta _{1}d\left(x_{n},x_{n+1}\right) +s^{2}\beta _{1}d\left( x_{n+1},x_{n+2}\right) +s\beta_{1}d\left( x_{n+2},x_{n}\right)  \\ 
+s\beta _{1}d\left( x_{n},x_{n+1}\right) +\beta _{2}d\left(x_{n-1},x_{n}\right) +\beta _{3}d\left( x_{n+1},x_{n+2}\right) +\beta
_{4}d\left( x_{n+1},x_{n}\right) \end{array}\right) \right].$\\
By $ \theta_1 $ and Lemma $ 2.5$, we obtain
\begin{equation*}
s^{2}d\left( x_{n},x_{n+2}\right) <s^{2}\beta _{1}d\left(x_{n-1},x_{n}\right) +s^{2}\beta _{1}d\left( x_{n},x_{n+1}\right)+s^{2}\beta _{1}d\left( x_{n+1},x_{n+2}\right) +s\beta _{1}d\left(x_{n+2},x_{n}\right) 
\end{equation*}
\begin{equation*}
+s\beta _{1}d\left( x_{n},x_{n+1}\right) +\beta _{2}d\left(x_{n-1},x_{n}\right) +\beta _{3}d\left( x_{n+1},x_{n+2}\right) +\beta
_{4}d\left( x_{n+1},x_{n}\right),
\end{equation*}
Therefore,
\begin{equation*}
\left( s^{2}-s\beta _{1}\right) d\left( x_{n},x_{n+2}\right) <s^{2}\beta_{1}d\left( x_{n-1},x_{n}\right) +s^{2}\beta _{1}d\left(
x_{n},x_{n+1}\right) +s\beta _{1}d\left( x_{n+1},x_{n+2}\right) 
\end{equation*}
\begin{equation}
+s\beta _{1}d\left( x_{n},x_{n+1}\right) +\beta _{2}d\left(x_{n-1},x_{n}\right) +\beta _{3}d\left( x_{n+1},x_{n+2}\right) +\beta
_{4}d\left( x_{n+1},x_{n}\right). 
\end{equation}
Taking the limit as $\ n$ $\rightarrow \infty $ in $\left( 3.9\right) $ and using $\left( 3.8\right) ,$ since $ s^{2}-s\beta _{1}>0$ , we have
\begin{equation*}
\lim\limits_{n \rightarrow +\infty}d\left( x_{n},x_{n+2}\right) =0.\\
\end{equation*}
Hence $\left( 3.4\right) $ is proved.\\
Next, we show that $\left\lbrace  x_{n}\right\rbrace  $ is an Cauchy sequence in\ $X,$
if otherwise there exists an $\varepsilon $ $>0$ for which we can find sequences of positive integers \ $\left\lbrace  x_{n_{\left( k\right) }}\right\rbrace _k $ and $\left\lbrace x_{m_{\left( k\right) }}\right\rbrace _k $ of  $\left( x_{n}\right) $ such that,for all positive integers
$k$, $n_{\left( k\right) }>m_{\left( k\right) }>k,$
\begin{equation}
d\left(x_{m_{\left( k\right) }},x_{n_{\left( k\right) }}\right) \geq \varepsilon 
\end{equation}
and
\begin{equation}
\text{ }d\left( x_{m_{\left( k\right) }},x_{n_{\left( k\right) -1}}\right)
<\varepsilon   
\end{equation}
From $\left( 3.10\right) $ and using the rectangular inequality, we get
\begin{equation}
\varepsilon \leq d\left( x_{m_{\left( k\right) }},x_{n_{\left( k\right)}}\right) \leq sd\left( x_{m_{\left( k\right) }},x_{m_{\left( k\right)+1}}\right) +sd\left( x_{m_{\left( k\right) +1}},x_{n_{\left( k\right)+1}}\right) +sd\left( x_{n_{\left( k\right) +1}},x_{n_{\left( k\right)}}\right).\\
\end{equation}
Taking the upper limit as $k$ $\rightarrow \infty $ in $ (3.12) $ and using $\left(3.4\right),$ we get
\begin{equation}
\frac{\varepsilon }{s}\lim\limits_{n \rightarrow +\infty}\sup d\left(x_{m_{\left( k\right) +1}},x_{n_{\left( k\right) +1}}\right). 
\end{equation}
 Moreover, 
\begin{equation*}
d\left( x_{m_{\left( k\right) }},x_{n_{\left( k\right) }}\right) \leq sd\left( x_{m_{\left( k\right) }},x_{n_{\left( k\right) -1}}\right)+sd\left( x_{n_{\left( k\right) -1}},x_{n_{\left( k\right) +1}}\right)+sd\left( x_{n_{\left( k\right)+1}},x_{n_{\left( k\right) }}\right).\\
\end{equation*}
Then, from $\left( 3.11\right) $ and $\left( 3.8\right),$ we get
\begin{equation}
\lim\limits_{n \rightarrow +\infty}\sup d\left( x_{m_{\left( k\right) }},x_{n_{\left(
k\right) }}\right) \leq s\varepsilon. 
\end{equation}
On the other hand we have
\begin{equation}
d\left( x_{n_{\left( k\right) }},x_{m_{\left( k\right) +1}}\right) \leq sd\left( x_{n_{\left( k\right) }},x_{n_{\left( k\right) -1}}\right)+sd\left( x_{n_{\left( k\right) -1}},x_{m_{\left( k\right) }}\right)+sd\left( x_{x_{m_{\left( k\right) }}},x_{m_{\left( k\right) +1}}\right).
\end{equation}
Then , from $\left( 3.11\right) $ and $\left( 3.8\right) $, we get 
\begin{equation}
\lim\limits_{n \rightarrow +\infty}\sup d\left( x_{n_{\left( k\right) }},x_{m_{\left(
k\right) +1}}\right) \leq s\varepsilon.  
\end{equation}
Applying $\left( 3.1\right)$ with $ x=  x_{m_{\left( k\right) }}$ and $ y= x_{n_{\left( k\right) }} $, we have 
\begin{equation*}
\theta \left( s^{2}d\left( x_{m_{\left( k\right) +1}},x_{n_{\left( k\right)+1}}\right) \right) =\theta \left( s^{2}d\left( Tx_{m_{\left( k\right)}},Tx_{n_{\left( k\right) }}\right) \right) 
\end{equation*}
\begin{equation}
\leq \phi \left( \theta \left( \begin{array}{c}\beta _{1}d\left( x_{m_{\left( k\right) }},x_{n_{\left( k\right) }}\right)
+\beta _{2}d\left( x_{m_{\left( k\right) }},Tx_{m_{\left( k\right) }}\right) \\ 
+\beta _{3}d\left( x_{n_{\left( k\right) }},Tx_{n_{\left( k\right) }}\right)+\beta _{4}d\left( x_{n_{\left( k\right) }},Tx_{m_{\left( k\right) }}\right) \end{array}\right) \right). 
\end{equation}
Now taking the upper limit as $k\rightarrow $ $\infty $ in $\left(3.17\right) $ and using $\left( \theta _{1}\right) ,\left( \theta_{3}\right) ,\left( \phi _{3}\right) $,  $ \left(3.13\right), $ $\left( 3.8\right) ,\left(3.14\right) ,\left( 3.16\right) $ and Lemma $\left( {2.5}\right) $ we have
\begin{equation}
\theta \left( s^{2}.\frac{\varepsilon }{s}\right) =\theta \left( \varepsilon .s\right) \leq \theta \left( s^{2}\lim\limits_{k \rightarrow +\infty}\sup d\left( x_{m_{\left( k\right) +1}},x_{n_{\left( k\right) +1}}\right) \right) 
\end{equation}
\begin{equation*}
\leq \phi \left( \theta \left( \begin{array}{c}\beta _{1}\lim\limits_{k \rightarrow +\infty}\sup d\left( x_{m_{\left( k\right)
}},x_{n_{\left( k\right) }}\right) +\beta _{2}\lim\limits_{k \rightarrow +\infty}\sup
d\left( x_{m_{\left( k\right) }},Tx_{m_{\left( k\right) }}\right)  \\ 
+\beta _{3}\lim\limits_{k \rightarrow +\infty}\sup d\left( x_{n_{\left( k\right)
}},Tx_{n_{\left( k\right) }}\right) +\beta _{4}\lim\limits_{k \rightarrow +\infty}\sup
d\left( x_{n_{\left( k\right) }},Tx_{m_{\left( k\right) }}\right) \end{array}\right) \right)
\end{equation*}
\begin{equation}
\leq\phi \left( \theta \left( \beta _{1}s\varepsilon +\beta_{4}s\varepsilon \right) \right)  <\theta \left(s\varepsilon \left( \beta _{1}+\beta _{4}\right) \right).
\end{equation}
Therefore
$\varepsilon .s<s\varepsilon \left( \beta _{1}+\beta _{4}\right)$ implies $s<\beta _{1}+\beta _{4}.$ Which is a contradiction.\\ Consequently, $\left\lbrace  x_{n}\right\rbrace  $ is a Cauchy sequence in $ \alpha-\eta- $complete rectangular b- metric space $\left( X,d\right).$
Since $\alpha \left( x_{n-1},x_{n}\right) \geq 1$ or $\eta \left( x_{n-1},x_{n}\right)\leq 1,$ for all $n\in \mathbb{N}$.\\
 This implies that the sequence $\left\lbrace  x_{n}\right\rbrace  $ is converges to some $z\in X$.
Suppose that\\ $z\neq Tz$. Then, we have all the assumption of Lemma 2.2 and $T$ is $\left( \alpha ,\eta \right)-$continuous, then  $Tx_{n}\rightarrow Tz  $ as $ n\rightarrow \infty$.
Therefore,
\begin{equation*}
\frac{1}{s}d\left( z,Tz\right) \leq\lim\limits_{n \rightarrow +\infty}\sup d\left(
x_{n},Tx_{n}\right) =0.
\end{equation*}
Hence we have $d\left( z,Tz\right) =0$ and so $Tz=z.$ Thus $z$ is a fixed point of $T.$\\
Uniqueness.\\Let $z,u\in $ Fix $\left( T\right) $ where $z\neq u$ and $\alpha \left(z,u\right) \geq 1$ or $\eta \left( z,u\right) \leq 1.$\\
Applying $\left(3.1\right) $ with $ x=z $ and $ y=u $ we have
\begin{align*}
\theta \left( d\left( z,u\right) \right)  &=\theta \left( d\left(Tz,Tu\right) \right)\\
&\leq \theta \left( s^{2}d\left( Tz,Tu\right) \right)\\ 
&\leq \phi \left( \theta \left( \beta _{1}d\left( z,u\right) +\beta_{2}d\left( z,Tz\right) +\beta _{3}d\left( u,Tu\right) +\beta _{4}d\left(Tz,u\right) \right) \right)\\
&\leq \phi \left( \theta \left( \beta _{1}d\left( z,u\right)  +\beta _{4}d\left(Tz,u\right) \right) \right)\\
& \leq \phi \left( \theta \left( d\left( z,u\right) \right) \right). 
\end{align*}
Since $ \theta $ is increasing, Therefore
\begin{equation*}
d\left( z,u\right) <d\left( z,u\right).
\end{equation*}
Which is a contradiction. Hence, $z=u$ and $T$ has a unique fixed point.
\end{proof}
 Recall that a self-mapping $ T $ is said to have the property $ P $, if $ Fix(T)=Fix( T^n) $ for every $ n\in \mathbb{N} $  
\begin{theorem}
 \bigskip Let $\alpha ,\eta $ : $X\times X\rightarrow \mathbb{R}^{+}$ be two function and let $\left( X,d\right) $ be an $\left( \alpha,\eta \right) -$complete rectangular b-metric space. Let $T:X\rightarrow X$ be a mapping satisfying the following conditions:\\
\item[(i)] $T$ is a triangular $\left( \alpha ,\eta \right) -$ admissible mapping;\\
\item[(ii)] $T$ is an $\left( \alpha ,\eta \right) -\theta -\phi -$contraction;\\
\item[(iii)] $\alpha \left( z,Tz\right) \geq 1$ or $\eta \left(z,Tz\right) \leq 1,$ for all $z\in $ Fix $\left( T\right) .$\\
Then $\ T$ has the property $P.$\\
\end{theorem}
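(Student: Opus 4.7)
The plan is to prove property $P$ by the standard contradiction scheme along a periodic orbit. The inclusion $\mathrm{Fix}(T)\subseteq\mathrm{Fix}(T^{n})$ is automatic, so the content is the converse. Fix $n\ge 2$ and, for contradiction, pick $z\in\mathrm{Fix}(T^{n})$ with $Tz\ne z$. Set $z_{k}=T^{k}z$ and $d_{k}=d(z_{k},z_{k+1})$; the orbit is $n$-periodic, so $z_{k+n}=z_{k}$ and in particular $d_{n}=d_{0}$. The target is a strictly decreasing chain $d_{0}>d_{1}>\cdots>d_{n}$, which will contradict $d_{n}=d_{0}$.

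Before invoking the contraction I must verify $d_{k}>0$ for every $k$. Indeed, if $z_{k}=z_{k+1}$ for some $k$, then $z_{k}$ is a fixed point of $T$, so $z_{k+m}=z_{k}$ for every $m\ge 0$; choosing $m$ with $k+m$ a multiple of $n$ gives $z=z_{0}=z_{k+m}=z_{k}$, whence $Tz=z_{k+1}=z_{k}=z$, contradicting the choice of $z$. Thus every pair $(z_{k-1},z_{k})$ satisfies $d(Tz_{k-1},Tz_{k})=d_{k}>0$, so the contractive inequality (3.1) is applicable along the orbit provided the admissibility tag propagates.

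Next I propagate admissibility. Reading hypothesis (iii) at the periodic point $z$ (interpreted as extending to $\mathrm{Fix}(T^{n})$, since on actual $T$-fixed points the statement is vacuous), one has $\alpha(z,Tz)\ge 1$ or $\eta(z,Tz)\le 1$; an induction using $(T_{1})$--$(T_{2})$ of triangular $(\alpha,\eta)$-admissibility then yields $\alpha(z_{k-1},z_{k})\ge 1$ or $\eta(z_{k-1},z_{k})\le 1$ for every $k\ge 1$. Now apply (3.1) with $x=z_{k-1}$, $y=z_{k}$; since $d(y,Tx)=d(z_{k},z_{k})=0$, the right-hand side collapses to $\phi\bigl[\theta\bigl((\beta_{1}+\beta_{2})d_{k-1}+\beta_{3}d_{k}\bigr)\bigr]$. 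Combining $\theta(d_{k})\le\theta(s^{2}d_{k})$, Lemma~2.5 ($\phi(t)<t$ for $t>1$), monotonicity of $\theta$, and $\beta_{1}+\beta_{2}\le 1-\beta_{3}$ (which follows from $\sum\beta_{i}\le 1$ together with $\beta_{4}\ge 0$), I obtain
\[
d_{k}<\frac{\beta_{1}+\beta_{2}}{1-\beta_{3}}\,d_{k-1}\le d_{k-1},
\]
with positive denominator since $\beta_{3}<1/s\le 1$.

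Iterating this strict inequality gives $d_{0}>d_{1}>\cdots>d_{n}$, which contradicts the periodic identity $d_{n}=d_{0}$. Hence no such $z$ exists, $\mathrm{Fix}(T^{n})\subseteq\mathrm{Fix}(T)$, and property $P$ holds. The main obstacle is the orbit admissibility step: one must read hypothesis (iii) as a condition on all periodic points (so that $(T_{1})$--$(T_{2})$ activate the chain); once this is accepted the argument reduces to an orbit contraction almost identical to that used in Theorem 3.3, but now closed off by the periodicity $z_{n}=z$ rather than by Cauchyness.
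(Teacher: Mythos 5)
Your proposal is correct and follows essentially the same route as the paper: propagate the admissibility tag along the orbit of $z\in\mathrm{Fix}(T^{n})$ via hypothesis (iii) and triangular admissibility, apply the contraction (3.1) to consecutive orbit points (where $d(y,Tx)=0$ collapses the right-hand side), and derive the strict decrease $d_{k}<\frac{\beta_{1}+\beta_{2}}{1-\beta_{3}}d_{k-1}\le d_{k-1}$. The only difference is cosmetic: you close the contradiction cleanly with the periodicity identity $d_{n}=d_{0}$, whereas the paper applies the inequality once at $(T^{n-1}z,z)$ and appeals to the decay $d(T^{m}z,T^{m+1}z)\to 0$ established in Theorem 3.3; your explicit check that every $d_{k}>0$ and your remark on reading (iii) at points of $\mathrm{Fix}(T^{n})$ are details the paper glosses over.
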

\begin{proof}
Let $z$ $\in $ Fix $\left( T^{n}\right) $ for some fixed $n>1$. As $\alpha \left( z,Tz\right) \geq 1$ or $\eta \left( z,Tz\right) \leq 1$ \\and  $T$ is a triangular $\left( \alpha ,\eta \right) $-admissible mapping, then\\ 
\begin{equation*}
\alpha \left( Tz,T^{2}z\right) \geq1 \text{ or }  \eta \left( T^{2}z,Tz\right)\leq 1. 
\end{equation*}
Continuing this process, we have
\begin{equation*}
\alpha \left( T^{n}z,T^{n+1}z\right) \geq 1\text{ or }\eta \left(T^{n}z,T^{n+1}z\right) \leq 1,
\end{equation*}
for all $n\in \mathbb{N}$. By $\left( T_3\right)$ and $\left( T_4\right)$  we get
\begin{equation*}
\alpha \left( T^{m}z,T^{n}z\right) \geq 1\text{ or }\eta \left(
T^{m}z,T^{n}z\right) \leq 1,\text{ \ }\forall \text{ }m,n\in \mathbb{N},\text{ }n\neq m.
\end{equation*}
Assume that $z\notin $ Fix $\left( T\right) ,$i.e. $d\left( z,Tz\right) >0.$\\ 
Applying $\left(3.1\right) $ with $ x=T^{n-1}z $ and $ y=z $, we get
\begin{equation*}
d\left( z,Tz\right) =d\left( T^{n}z,Tz\right) =d\left( TT^{n-1}z,Tz\right)\leq s^{2}d\left(TT^{n-1}z,Tz\right),
\end{equation*}
which implies that
\begin{align*}
\theta(d\left(TT^{n-1}z,Tz\right))&\leq \phi\left[ \theta\left( \beta _{1}d\left( T^{n-1}z,z\right) +\beta _{2}d\left(T^{n-1}z,T^{n}z\right) +\beta _{3}d\left( z,Tz\right) +\beta _{4}d\left(
z,T^{n}z\right)\right) \right]\\ 
&< \theta\left[ \beta _{1}d\left( T^{n-1}z,z\right) +\beta _{2}d\left(T^{n-1}z,T^{n}z\right) +\beta _{3}d\left( z,Tz\right) +\beta _{4}d\left(z,T^{n}z\right)\right]\\ 
&=\theta\left[ \beta _{1}d\left( T^{n-1}z,T^{n}z\right) +\beta _{2}d\left(T^{n-1}z,T^{n}z\right) +\beta _{3}d\left( z,Tz\right)\right]. 
\end{align*}
Since $ \theta $ is increasing, Therefore,
\begin{equation*}
 d\left( z,Tz\right) <\frac{\beta _{1}+\beta _{2}}{1-\beta _{3}}d\left( T^{n-1}z,T^{n}z\right) \leq d\left(T^{n-1}z,T^{n}z\right),
\end{equation*}
which is a contradiction as $d\left( T^{n-1}z,T^{n}z\right) \rightarrow 0$ and $d\left( z,Tz\right) >0.$
\end{proof}
Assuming the following conditions, we prove that Theorem $ 3.3 $ still hold for $ T $ not necessarily continuous: 
\begin{theorem}
Let $\alpha,\eta :$ $X\times X\rightarrow \mathbb{R} ^{+}$ be two functions and let $d\left( X,d\right) $ be an $\left( \alpha,\eta \right) -$complete rectangular b-metric space.\\
Let $T:X\rightarrow X$ be a mapping satisfying the following assertions:\\
\item[(i)]$T$ is triangular $\left( \alpha ,\eta \right) -$admissible;\\
\item[(ii)] $T$ is $\left( \alpha ,\eta \right) -\theta
-\phi -$contraction;\\
\item[(iii)] there exists $x_{0}\in X$ such that $\alpha \left(x_{0},Tx_{0}\right) \geq 1$ or $\eta \left( x_{0},Tx_{0}\right) \leq 1;$\\
\item[(iv)] $\left( X,d\right) $ is an $\left( \alpha ,\eta \right) $-regular rectangular b-metric space.\\

Then $T$ has a fixed point. Moreover, $T$ has a unique fixed point whenever $\\
\alpha \left( z,u\right) \geq 1$ or $\eta \left( z,u\right) \leq 1$ for all $z,u\in Fix\left( T\right).$
\end{theorem}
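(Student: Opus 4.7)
The plan is to mirror the proof of Theorem 3.3 up to the moment continuity of $T$ was invoked, and to substitute $(\alpha,\eta)$-regularity of the space for that hypothesis. Starting from $x_0$ with $\alpha(x_0, Tx_0) \geq 1$ or $\eta(x_0, Tx_0) \leq 1$ and setting $x_{n+1} = Tx_n$, the triangular $(\alpha,\eta)$-admissibility yields $\alpha(x_m, x_n) \geq 1$ or $\eta(x_m, x_n) \leq 1$ for all $m \neq n$. The entire portion of the Theorem 3.3 argument that produces a Cauchy sequence with pairwise distinct terms and $d(x_n, x_{n+1}), d(x_n, x_{n+2}) \to 0$ transfers verbatim, so by $(\alpha,\eta)$-completeness there exists $z \in X$ with $x_n \to z$.

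The new ingredient is $(\alpha,\eta)$-regularity: applied to $\{x_n\}$ it provides $\alpha(x_n, z) \geq 1$ or $\eta(x_n, z) \leq 1$, which is precisely what is needed to use (3.1) with $x = x_n$ and $y = z$. Assuming for contradiction that $Tz \neq z$, for $n$ large one has $x_{n+1} \neq Tz$ (otherwise a constant subsequence would force $z = Tz$), so $d(Tx_n, Tz) > 0$ and
\begin{equation*}
\theta\bigl(s^{2} d(x_{n+1}, Tz)\bigr) \leq \phi\bigl[\theta\bigl(\beta_1 d(x_n, z) + \beta_2 d(x_{n+1}, x_n) + \beta_3 d(Tz, z) + \beta_4 d(z, x_{n+1})\bigr)\bigr].
\end{equation*}
As $n \to \infty$ the terms carrying $\beta_1$, $\beta_2$ and $\beta_4$ vanish, so by continuity of $\theta$ and $\phi$ the right-hand side tends to $\phi[\theta(\beta_3 d(z, Tz))]$. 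For the left-hand side I would apply Lemma 2.2(b) to the Cauchy sequence $\{x_{n+1}\}$ (whose terms are pairwise distinct) converging to $z \neq Tz$, obtaining $\tfrac{1}{s} d(z, Tz) \leq \liminf_n d(x_{n+1}, Tz)$. Combining these via monotonicity and continuity of $\theta$ and Lemma 2.5 yields
\begin{equation*}
\theta\bigl(s\cdot d(z, Tz)\bigr) \leq \phi\bigl[\theta(\beta_3 d(z, Tz))\bigr] < \theta\bigl(\beta_3 d(z, Tz)\bigr),
\end{equation*}
and strict monotonicity of $\theta$ forces $s \cdot d(z, Tz) < \beta_3 d(z, Tz)$, i.e.\ $s < \beta_3$, contradicting $\beta_3 < 1/s < 1 < s$. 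Hence $Tz = z$.

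Uniqueness is verbatim from Theorem 3.3: for distinct fixed points $z, u$ with $\alpha(z, u) \geq 1$ or $\eta(z, u) \leq 1$, applying (3.1) with $x = z$, $y = u$ collapses the argument of $\theta$ on the right to at most $(\beta_1 + \beta_4)\,d(z, u) \leq d(z, u)$, and Lemma 2.5 then produces $d(z, u) < d(z, u)$, a contradiction. The main obstacle is the limit passage on the left-hand side of (3.1): one cannot use continuity of $d(\cdot, Tz)$ directly, so Lemma 2.2(b) is essential for producing the lower bound $\tfrac{1}{s} d(z, Tz)$, and the structural hypothesis $\beta_3 < 1/s$ baked into Definition 3.1 is precisely what is needed for the final inequality to produce a contradiction.
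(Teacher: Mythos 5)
Your proposal is correct and follows essentially the same route as the paper: $(\alpha,\eta)$-regularity supplies the admissibility of the pairs $(x_n,z)$, the contraction (3.1) is applied with $x=x_n$, $y=z$, Lemma 2.2(b) gives the lower bound $\tfrac{1}{s}\,d(z,Tz)$ on the limit of $d(Tx_n,Tz)$, and $\beta_3<\tfrac{1}{s}$ yields the contradiction. The only cosmetic difference is that the paper strips $\theta$ and $\phi$ at each finite $n$ (via monotonicity and Lemma 2.5) before passing to the limit, so it never evaluates $\theta$ at $\beta_3 d(z,Tz)$, which would be $\theta(0)$ when $\beta_3=0$; your limit-inside-$\theta$ version needs that edge case handled separately via $(\theta_2)$, but it is immediate.
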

\begin{proof}
Let x$_{0}\in X$ such that $\alpha \left(x_{0},Tx_{0}\right) \geq 1$ or $\eta \left( x_{0},Tx_{0}\right) \leq 1$.
 Similar to the proof of Theorem 3.3, we can conclude that 
\begin{equation*}
\left( \alpha \left( x_{n},x_{n+1}\right) \geq 1\text{ or }\eta \left(x_{n},x_{n+1}\right) \leq 1\right),\text{ and }x_{n}\rightarrow z\text{ as }n\rightarrow \infty,
\end{equation*}
where $x_{n+1}=Tx_{n}.$ \\
From $\left( iv\right)$ $ \alpha \left( x_{n+1},z\right) \geq 1\text{ or }\eta \left( x_{n+1},z\right)\leq 1 $, hold for $n\in \mathbb{N}.$\\
Suppose that $Tz=x_{n_{0+1}}=Tx_{n_{0}}$ for some $n_{0}\in \mathbb{N}.$ From the  Theorem 3.3 we know that the members of the sequence $\left\lbrace x_{n}\right\rbrace  $ are distinct. Hence, we have $Tz\neq Tx_{n},$ i.e. $d\left(Tz,Tx_{n}\right) >0$ for all $n>n_{0}.$ Thus, we can apply $\left(3.1\right) $, to $x_{n}$ and $z$ for all $n>n_{0}$ to get
\begin{align*}
\theta \left( d\left( Tx_{n},Tz\right) \right) &\leq \theta \left(s^{2}d\left( Tx_{n},Tz\right) \right)\\
& \leq \phi \left( \theta \left( \beta_{1}d\left( x_{n},z\right) +\beta _{2}d\left( x_{n},Tx_{n}\right) +\beta_{3}d\left( z,Tz\right) +\beta _{4}d\left( z,Tx_{n}\right) \right) \right).
\end{align*}
By Lemma $\left( {2.5}\right) $ and $\left( \theta _{1}\right) $, we obtain
\begin{equation}
d\left( Tx_{n},Tz\right) <\left( \beta _{1}d\left( x_{n},z\right) +\beta_{2}d\left( x_{n},Tx_{n}\right) +\beta _{3}d\left( z,Tz\right) +\beta_{4}d\left( z,Tx_{n}\right) \right).
\end{equation}
By taking the limit as $n$ $\rightarrow $ $\infty $ in $\left( 3.20\right) $, we have 
$$ \lim_{n\rightarrow \infty }\sup d\left(Tx_{n},Tz\right) \leq \beta _{3}d\left( z,Tz\right). $$
Assume that $z\neq Tz.$ Then, from lemma 2.2, 
\begin{equation*}
\frac{1}{s}d\left( z,Tz\right) \leq \lim_{n\rightarrow \infty }\sup d\left(
Tx_{n},Tz\right) \leq \beta _{3}d\left( z,Tz\right).
\end{equation*}
by assumption $\beta _{3}<\frac{1}{s},$ we have $d\left( z,Tz\right) =0
$ and so $z=Tz.$ Thus, $z$ is a fixed point of $T.$\\
The proof of the uniqueness is similarly to that of Theorem 3.3.\\
\end{proof}
Above Theorems. If we take $\phi \left( t\right) =t^{k},$ for some fixed $k\in %
\left] 0,1\right[ ,$ \\
where $\beta _{1}=1\	and \ \beta _{2}=\beta _{3}=\beta _{4}=0.$
We obtain the following extension of  Jamshaid et al, result $\left(\text{Theorem 2.2}\right)$  \cite{JA}  of $\left( \alpha ,\eta \right) -$ complete rectangular $b$-metric space.
\begin{corollary}
Let $\alpha ,\eta :X\times X\rightarrow \left[ 0,+\infty \right[ $ be two functions, $d\left( X,d\right) $ be an $\left( \alpha ,\eta \right) -$ complete rectangular $b$-metric space and let $T:X\rightarrow X$ be self-mapping. Suppose for all $x,y\in X$ with $\alpha \left( x,y\right) \geq 1$ or $\eta \left( x,y\right) \leq 1$ and $d\left( Tx,Ty\right) >0$\\
 we have \\
\begin{equation*}
\theta\left[ s^{2} d\left( Tx,Ty\right) \leq \left[ \theta \left( d\left( x,y\right)\right] 
\right) \right] ^{k},
\end{equation*}
where $\ \theta \in \Theta $ and $k\in \left] 0,1\right[$. If the mapping $ T $ satisfying the following assertions:\\
point, if \\
\item[(i)] $T$ is a triangular $\left( \alpha ,\eta \right)-$admissible mapping;\\
\item[(ii)] there exists $x_{0}\in X$ such that $\alpha \left(x_{0},Tx_{0}\right) \geq 1$ or $\eta \left( x_{0},Tx_{0}\right) \leq 1;$\\
\item[(iii)] $T$ is $\left( \alpha,\eta \right) $ continuous; or \\
\item[(iv)] is an $\left( \alpha ,\eta \right) -$regular rectangular $b-$metric space.\\
Then $T$ has a fixed point. Moreover, $T$ has a unique fixed point whenever $\\
\alpha \left( z,u\right) \geq 1$ or $\eta \left( z,u\right) \leq 1$ for all $z,u\in Fix\left( T\right).$
\end{corollary}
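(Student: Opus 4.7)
The plan is to recognize the stated contraction as a particular case of the $(\alpha,\eta)$-$\theta$-$\phi$-contraction from Definition~3.1, and then invoke either Theorem~3.3 or Theorem~3.4 depending on whether hypothesis (iii) or (iv) of the corollary holds. Concretely, I would set $\phi(t)=t^{k}$ for the given $k\in\,]0,1[$, together with the coefficient choice $\beta_{1}=1$ and $\beta_{2}=\beta_{3}=\beta_{4}=0$ appearing in Definition~3.1. With this substitution, the right-hand side of the inequality defining the $(\alpha,\eta)$-$\theta$-$\phi$-contraction collapses to
$$\phi\bigl[\theta\bigl(\beta_{1}d(x,y)+\beta_{2}d(Tx,x)+\beta_{3}d(Ty,y)+\beta_{4}d(y,Tx)\bigr)\bigr]=\bigl[\theta(d(x,y))\bigr]^{k},$$
so the hypothesis of the corollary is exactly the assertion that $T$ is an $(\alpha,\eta)$-$\theta$-$\phi$-contraction for these particular choices.

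Next I would check the admissibility of these choices. For $\phi(t)=t^{k}$ with $0<k<1$: since $t\geq 1$ we have $t^{k}\geq 1$, so $\phi$ maps $[1,\infty[$ into itself; monotonicity and continuity on $[1,\infty[$ are immediate; and for $t>1$ the iterates satisfy $\phi^{n}(t)=t^{k^{n}}\to t^{0}=1$ as $n\to\infty$, which is condition $(\Phi_{2})$. Hence $\phi\in\Phi$. The quantitative side-conditions of Definition~3.1, namely $\sum_{i=1}^{4}\beta_{i}=1\leq 1$ and $\beta_{3}=0<1/s$, are trivially satisfied.

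With these verifications in hand, the conclusion is a direct appeal: hypotheses (i), (ii), (iii) of Theorem~3.3 are met whenever hypotheses (i), (ii), (iii) of the corollary hold, so the existence of a fixed point follows from Theorem~3.3; if instead hypotheses (i), (ii), (iv) of the corollary hold, then (i)--(iv) of Theorem~3.4 are met and existence follows from Theorem~3.4. The uniqueness clause of the corollary is identical to the uniqueness clause of either theorem, so it transfers verbatim. I do not anticipate any real obstacle in this argument; the only point requiring a moment of care is confirming that $\phi(t)=t^{k}$ genuinely belongs to $\Phi$, in particular the verification that $\phi^{n}(t)=t^{k^{n}}\to 1$ for each $t>1$.
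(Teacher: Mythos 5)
Your proposal is correct and matches the paper's own (much terser) argument: the paper likewise sets $\phi(t)=t^{k}$ with $\beta_{1}=1$, $\beta_{2}=\beta_{3}=\beta_{4}=0$ and reduces the corollary to Theorem~3.3 (continuous case) or the regularity-based theorem (which the paper numbers 3.5, not 3.4). Your explicit verification that $\phi^{n}(t)=t^{k^{n}}\to 1$ and that the side-conditions on the $\beta_{i}$ hold is a welcome addition the paper omits.
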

\begin{proof}
Let $\phi \left( t\right)$ $=t^{k }$, we prove that  $T$ is an $\left( \alpha ,\eta \right)
-\theta -\phi -$contraction, Hence T satisfies in assumption of Theorem $ 3.3 $ or Theorem $ 3.5 $ and is the unique fixed point of $%
T.$\\
\end{proof}
It follows from Theorem 3.3, we obtain the following fixed point theorems
for $\theta -\phi $- Kannan-type contraction and $\theta -\phi $ Reich-type contraction.\\
\begin{theorem}
Let $\left( X,d\right) $ be a $\left( \alpha ,\eta \right) -$complete rectangular $b$-metric space and let $\alpha ,\eta :X\times X\rightarrow \left[ 0,+\infty \right[ $ be two functions. Let $T:X\times X\rightarrow X$ be a self mapping satisfying following conditions:\\
\item[(i)]  $T$ is a triangular $\left( \alpha ,\eta \right) -$ admissible mapping;\\
\item[(ii)] $T$ is a $\left( \alpha ,\eta \right) -\theta -\phi -$Kannan-type contraction;\\
\item[(iii)] $T$ there exists $x_{0}\in X$ such that $\alpha \left(x_{0},Tx_{0}\right) \geq 1$ or $\eta \left( x_{0},Tx_{0}\right) \leq 1;$\\
\item[(iv)] $T$is a $\left( \alpha ,\eta \right) -$continuous.\\
Then T has a fixed point. Moreover, $T$ has a unique fixed point when $ \alpha \left( x,y\right) \geq 1$ or $\eta \left( x,y\right) \leq 1$ for all $x,y\in X.$\\
\end{theorem}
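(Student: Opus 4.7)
The plan is to mimic the argument for Theorem~3.3, replacing the general $(\alpha,\eta)$-$\theta$-$\phi$-contraction inequality by the Kannan-type inequality of Definition~3.2(1); equivalently, one may view the hypothesis as the contraction~(3.1) specialised to $\beta_{1}=\beta_{4}=0$, $\beta_{2}=\beta_{3}=\tfrac{1}{2s}$, so that $\sum\beta_i=\tfrac{1}{s}\le 1$ and $\beta_{3}=\tfrac{1}{2s}<\tfrac{1}{s}$; in principle the conclusion then follows as a special case of Theorem~3.3, but I will indicate the direct route since the contractive term is simpler here. Start from $x_{0}$ given by (iii) and form the Picard sequence $x_{n}=T^{n}x_{0}$. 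The triangular $(\alpha,\eta)$-admissibility together with $(T_3),(T_4)$ yields $\alpha(x_m,x_n)\ge 1$ or $\eta(x_m,x_n)\le 1$ for all $m\neq n$, which licenses every application of the Kannan-type inequality to pairs drawn from the orbit. If $x_{n_{0}}=Tx_{n_{0}}$ for some $n_{0}$ we are done, so assume $d_{n}:=d(x_{n},x_{n+1})>0$ for all $n$; the argument of Theorem~3.3 that the $x_n$ are pairwise distinct carries over verbatim, with the bound $d_m<\tfrac{1}{2s-1}d_{m-1}\le d_{m-1}$ obtained below in place of $d_m<\tfrac{\beta_1+\beta_2}{1-\beta_3}d_{m-1}$.

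Next I would show $d_{n}\to 0$. Applying the Kannan inequality with $x=x_{n-1}$, $y=x_{n}$ and using $\theta_1$ together with $\phi(t)<t$ for $t>1$ (Lemma~2.5) gives
\begin{equation*}
\theta(d_{n})\le\theta(s^{2}d_{n})\le\phi\!\left[\theta\!\left(\tfrac{d_{n-1}+d_{n}}{2s}\right)\right]<\theta\!\left(\tfrac{d_{n-1}+d_{n}}{2s}\right),
\end{equation*}
and since $\theta$ is increasing, $d_{n}<\tfrac{d_{n-1}+d_{n}}{2s}$, whence $(2s-1)d_{n}<d_{n-1}$ and therefore $d_{n}<d_{n-1}$ (using $s\ge 1$). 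Iterating the $\phi$-inequality as in~(3.7) yields $1\le\lim_{n}\theta(d_{n})\le\phi^{n}(\theta(d_{0}))\to 1$ by $(\Phi_{2})$, and then $(\theta_{2})$ forces $d_{n}\to 0$. For the diagonal second differences $d(x_{n},x_{n+2})$, applying the inequality with $x=x_{n-1}$, $y=x_{n+1}$ gives
\begin{equation*}
\theta(s^{2}d(x_{n},x_{n+2}))\le\phi\!\left[\theta\!\left(\tfrac{d_{n-1}+d_{n+1}}{2s}\right)\right];
\end{equation*}
passing to the limit and using $d_{n}\to 0$ together with $(\theta_{2}),(\theta_{3}),(\Phi_{3})$ produces $d(x_{n},x_{n+2})\to 0$.

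The Cauchy step is the main technical point and I would follow the proof of Theorem~3.3 closely. Arguing by contradiction, assume there exist $\varepsilon>0$ and subsequences $x_{m(k)},x_{n(k)}$ with $n(k)>m(k)>k$, $d(x_{m(k)},x_{n(k)})\ge\varepsilon$ and $d(x_{m(k)},x_{n(k)-1})<\varepsilon$. The rectangular $b$-inequality together with the limits already established gives, exactly as in~(3.13)--(3.16),
\begin{equation*}
\tfrac{\varepsilon}{s}\le\limsup_{k}d(x_{m(k)+1},x_{n(k)+1}),\qquad \limsup_{k}d(x_{m(k)},x_{n(k)})\le s\varepsilon.
\end{equation*}
Now applying the Kannan inequality to $x=x_{m(k)}$, $y=x_{n(k)}$, taking the upper limit, and using $d_{m(k)},d_{n(k)}\to 0$ together with continuity of $\theta$ and $\phi$, we get
\begin{equation*}
\theta(\varepsilon s)\le\theta\!\left(s^{2}\limsup_{k}d(x_{m(k)+1},x_{n(k)+1})\right)\le\phi\!\left[\theta\!\left(\tfrac{0+0}{2s}\right)\right]=\phi(\theta(0^{+}))=1,
\end{equation*}
which contradicts $\theta(\varepsilon s)>1$. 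Hence $\{x_{n}\}$ is Cauchy; by $(\alpha,\eta)$-completeness it converges to some $z\in X$, and by $(\alpha,\eta)$-continuity $Tx_{n}\to Tz$, so Lemma~2.2 gives $\tfrac{1}{s}d(z,Tz)\le\limsup_{n}d(x_{n+1},Tz)=0$, i.e.\ $Tz=z$.

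For uniqueness, if $z\neq u$ are fixed points with $\alpha(z,u)\ge 1$ or $\eta(z,u)\le 1$, applying the Kannan inequality to $x=z$, $y=u$ collapses its right-hand side to $\phi[\theta(0)]=1$, whereas the left-hand side is $\theta(d(z,u))>1$, a contradiction; hence $z=u$. The only real obstacle is the Cauchy argument, because the Kannan-type right-hand side has no $d(x,y)$-term and therefore simplifies more cleanly than in Theorem~3.3; the remaining book-keeping is formally identical.
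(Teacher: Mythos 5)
Your proposal is correct, and its opening reduction --- viewing the Kannan-type inequality as the contraction $(3.1)$ with $\beta_{1}=\beta_{4}=0$, $\beta_{2}=\beta_{3}=\tfrac{1}{2s}$ (so $\sum\beta_{i}=\tfrac{1}{s}\leq 1$ and $\beta_{3}<\tfrac{1}{s}$) and invoking Theorem 3.3 --- is exactly the paper's entire proof. The direct re-run of the Picard/Cauchy argument that follows is sound but redundant once that reduction is made.
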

\begin{proof}
If $T$ is a $\left( \alpha ,\eta \right) -\theta -\phi -$ Kannan-type contraction, thus 
there exists $\theta \in \Theta $ and $\phi \in \Phi $ with $\alpha \left( x,y\right) \geq 1$ or $\eta \left(x,y\right) \leq 1$ for any $x,y\in X,$ $Tx\neq Ty$, we have\\ 
\begin{equation*}
\theta \left[ s^{2}d\left( Tx,Ty\right) \right] \leq \phi \left[ \theta\left(\frac{\left( d\left( Tx,x\right) +d\left( Ty,y\right) \right) }{2s}\right) \right].\\
\end{equation*}
Therefore,
\begin{equation*}
\theta \left[ s^{2}d\left( Tx,Ty\right) \right] \leq \phi \left[ \theta\left( ( \beta _{1}d\left( x,y\right) + \beta _{2}d\left( Tx,x\right)+\beta _{3}d\left( Ty,y\right)+ \beta _{4}d\left( Tx,y\right)\right) \right],\\
\end{equation*}
where $\beta _{1}=\beta _{4}=0,$ $\beta _{2}=\beta _{3}=\frac{1}{2s},$ which implies that $T$ is a $\left( \alpha ,\eta \right) -\theta -\phi $ contraction 
Therefore, from the Theorem $\left( {3.3}\right).$ $T$ has a unique fixed point.\\
\end{proof}
\begin{theorem}
Let $\left( X,d\right) $ be a $\left( \alpha ,\eta \right) -$complete rectangular $b$-metric space and let $\alpha ,\eta :X\times X\rightarrow \left[ 0,+\infty \right[ $ be two functions. Let $T:X\times X\rightarrow X$ be a self mapping satisfying following conditions:\\
\item[(i)] $T$ is a triangular $\left( \alpha ,\eta \right)-$ admissible mapping;\\
\item[(ii)] $T$ is a $\left( \alpha ,\eta \right) -\theta -\phi -$ Reich-type contraction;\\
\item[(iii)] $T$ there exists $x_{0}\in X$ such that $\alpha \left(x_{0},Tx_{0}\right) \geq 1$ or $\eta \left( x_{0},Tx_{0}\right) \leq 1;$\\
\item[(iv)] $T$ is a $\left( \alpha ,\eta \right) -$continuous.\\
Then T has a fixed point. Moreover, $T$ has a unique fixed point when\\ $\alpha \left( x,y\right) \geq 1$ or $\eta \left( x,y\right) \leq 1$ for all $x,y\in X.$\\
\end{theorem}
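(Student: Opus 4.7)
The plan is to derive Theorem 3.7 from Theorem 3.3 in exactly the same manner as Theorem 3.6, by exhibiting every $(\alpha,\eta)-\theta-\phi-$Reich-type contraction as a special case of an $(\alpha,\eta)-\theta-\phi-$contraction in the sense of Definition 3.1. This avoids redoing the long Cauchy-sequence argument and makes the result a one-line consequence of the main theorem.

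First I would specialise the four parameters in Definition 3.1 by setting
$$\beta_{1}=\beta_{2}=\beta_{3}=\frac{1}{3s},\qquad \beta_{4}=0.$$
With these choices, for every pair $x,y\in X$ with $\alpha(x,y)\geq 1$ or $\eta(x,y)\leq 1$ and $Tx\neq Ty$, the expression inside $\theta$ on the right-hand side of (3.1) collapses to $\frac{d(x,y)+d(Tx,x)+d(Ty,y)}{3s}$, which is precisely the argument appearing in the Reich-type contraction inequality. Thus the hypothesis that $T$ is a $\theta-\phi-$Reich-type contraction becomes
$$\theta\!\left(s^{2}d(Tx,Ty)\right)\leq \phi\!\left[\theta\!\left(\beta_{1}d(x,y)+\beta_{2}d(Tx,x)+\beta_{3}d(Ty,y)+\beta_{4}d(y,Tx)\right)\right],$$
which is exactly condition (3.1).

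Next I would verify that this choice of the $\beta_{i}$'s meets the admissibility conditions of Definition 3.1: each $\beta_{i}$ is non-negative, the sum is $\sum_{i=1}^{4}\beta_{i}=\frac{1}{s}\leq 1$ because $s\geq 1$, and $\beta_{3}=\frac{1}{3s}<\frac{1}{s}$ as required. Hence $T$ is genuinely an $(\alpha,\eta)-\theta-\phi-$contraction.

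Finally, since hypotheses (i), (iii) and (iv) of the present theorem match those of Theorem 3.3 verbatim, and hypothesis (ii) reduces to condition (ii) of Theorem 3.3 through the identification above, Theorem 3.3 delivers a fixed point; the uniqueness clause under the additional assumption $\alpha(x,y)\geq 1$ or $\eta(x,y)\leq 1$ for all $x,y\in X$ transfers by the same reduction. I do not foresee any real obstacle, the only subtle point being that the sum $\sum\beta_{i}$ equals exactly $\frac{1}{s}$ and is therefore $\leq 1$ precisely because the rectangular $b$-metric parameter satisfies $s\geq 1$; if instead one worked with $s=1$ the inequality becomes an equality, which is still admissible under the convention $\sum\beta_{i}\leq 1$ in Definition 3.1.
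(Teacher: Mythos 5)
Your proposal is correct and follows essentially the same route as the paper: the paper likewise observes that choosing $\beta_{1}=\beta_{2}=\beta_{3}=\frac{1}{3s}$ and $\beta_{4}=0$ turns the Reich-type inequality into condition $(3.1)$, so that $T$ is an $\left(\alpha,\eta\right)-\theta-\phi-$contraction and Theorem 3.3 applies directly. Your extra verification that $\sum_{i}\beta_{i}=\frac{1}{s}\leq 1$ and $\beta_{3}=\frac{1}{3s}<\frac{1}{s}$ is a welcome detail the paper leaves implicit.
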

\begin{proof}
If $T$ is a $\left( \alpha ,\eta \right) -\theta -\phi -$ Reich-type contraction, thus 
there exists $\theta \in \Theta $ and $\phi \in \Phi $ with $\alpha \left( x,y\right) \geq 1$ or $\eta \left(x,y\right) \leq 1$ for any $x,y\in X,$ $Tx\neq Ty$, we have 
$$\theta \left[s^{2} d\left( Tx,Ty\right) \right] \leq \phi \left[ \theta\left(\frac{\left( d\left(x,y\right)+ d\left( Tx,x\right) +d\left( Ty,y\right) \right) }{3s}\right) \right].$$
Therefore,
\begin{equation*}
\theta \left[ s^{2}d\left( Tx,Ty\right) \right] \leq \phi \left[ \theta\left( 
( \beta _{1}d\left( x,y\right) + \beta _{2}d\left( Tx,x\right)+\beta _{3}d\left( Ty,y\right)+ \beta _{4}d\left( Tx,y\right)\right) \right],
\end{equation*}
where $\beta _{1}=$ $\beta _{2}=\beta _{3}=\frac{1}{3s}\ and \ \beta _{4}=0$, which implies that $T$ is a $\left( \alpha ,\eta \right) -\theta -\phi $ contraction. Therefore, from the Theorem $\left( {3.3}\right).$ $T$ has a unique fixed
point.\\
\end{proof}
\begin{corollary}
Let $\left( X,d\right) $ be a $\left( \alpha ,\eta \right) -$complete rectangular $b$-metric space and let $\alpha ,\eta :X\times X\rightarrow \left[ 0,+\infty \right[ $ be two functions. Let $T:X\times X\rightarrow X$ be a self mapping satisfying following conditions:\\
\item[(i)] $T$ is a triangular $\left( \alpha ,\eta \right) -$ admissible mapping;\\
\item[(ii)] $T$ is a $\left( \alpha ,\eta \right) -$ Kannan type mapping,\\
\item[(iii)] $T$ there exists $x_{0}\in X$ such that $\alpha \left(x_{0},Tx_{0}\right) \geq 1$ or $\eta \left( x_{0},Tx_{0}\right) \leq 1;$\\
\item[(iv)] $T$is a $\left( \alpha ,\eta \right) -$continuous.\\

Then T has a fixed point. Moreover, $T$ has a unique fixed point when\\ $\alpha \left( x,y\right) \geq 1$ or $\eta \left( x,y\right) \leq 1$ for all $x,y\in X.$ Then $T$ has a unique fixed point $x\in X.$
\end{corollary}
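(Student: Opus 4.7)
The plan is to reduce Corollary 3.9 to the already-proved Theorem 3.7 by exhibiting a concrete pair $(\theta,\phi)$ that turns the Kannan-type inequality into the $\theta-\phi-$Kannan-type contraction inequality. All of hypotheses (i), (iii), (iv) and the comparability clause ensuring uniqueness in Corollary 3.9 match those of Theorem 3.7 verbatim, so after the reduction nothing further is required.

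To carry this out, denote the Kannan constant by $\lambda\in\left]0,\tfrac{1}{2s}\right[$ (renaming it to avoid clash with the admissibility function). The defining inequality of Definition 3.2(3) can be rewritten as
\[
s^{2}d(Tx,Ty)\le k\cdot\frac{d(Tx,x)+d(Ty,y)}{2s},\qquad k:=2s\lambda\in(0,1),
\]
using $d(y,Ty)=d(Ty,y)$. The candidate auxiliary functions are $\theta(t)=e^{t}$ on $\left]0,\infty\right[$ and $\phi(t)=t^{k}$ on $[1,\infty)$. A routine check gives $\theta\in\Theta$ (strictly increasing, continuous, with $\theta(x_n)\to 1\Leftrightarrow x_n\to 0$) and $\phi\in\Phi$, the latter because $\phi$ is non-decreasing and continuous with $\phi(1)=1$ and $\phi^{n}(t)=t^{k^{n}}\to 1$ for every $t>1$, since $k^{n}\to 0$.

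Applying the monotone $\theta$ to the displayed inequality and using the identity $e^{kA}=(e^{A})^{k}$ gives
\[
\theta\!\left(s^{2}d(Tx,Ty)\right)=e^{s^{2}d(Tx,Ty)}\le e^{k\cdot(d(Tx,x)+d(Ty,y))/(2s)}=\phi\!\left(\theta\!\left(\frac{d(Tx,x)+d(Ty,y)}{2s}\right)\right),
\]
which is precisely the $\theta-\phi-$Kannan-type contraction inequality of Definition 3.2(1). Thus hypothesis (ii) of Corollary 3.9 implies hypothesis (ii) of Theorem 3.7, and invoking Theorem 3.7 delivers existence of a fixed point, together with uniqueness under the stated comparability condition.

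The only mildly delicate point is the choice of the pair $(\theta,\phi)$: the exponential-power pair is essentially forced, since the Kannan condition is affine-multiplicative and the identity $e^{kA}=(e^{A})^{k}$ converts the multiplicative constant $k\in(0,1)$ directly into the exponent defining $\phi$. One should also note in passing that the induced weights $\beta_{1}=\beta_{4}=0$, $\beta_{2}=\beta_{3}=\tfrac{1}{2s}$ satisfy both $\sum\beta_{i}=\tfrac{1}{s}\le 1$ and $\beta_{3}=\tfrac{1}{2s}<\tfrac{1}{s}$, so the standing requirements on the $\beta_{i}$ in Definition 3.1 are met, and the reduction to Theorem 3.7 is legitimate.
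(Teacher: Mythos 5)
Your proposal is correct and follows essentially the same route as the paper: both choose $\theta(t)=e^{t}$ and $\phi(t)=t^{2s\lambda}$ (the paper writes the Kannan constant as $\alpha$, overloading notation) and use $e^{kA}=\left(e^{A}\right)^{k}$ to convert the Kannan inequality into the $\theta-\phi-$Kannan-type contraction, then invoke the earlier fixed point theorem. Your version is in fact slightly tidier, since you verify $\phi\in\Phi$ and the constraints on the $\beta_{i}$ explicitly and route the reduction through Theorem 3.7 rather than citing Theorem 3.3 directly as the paper does.
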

\begin{proof}
Let $\theta (t)=$ $e^{t}$ for all $t$ $\in $ $\left] 0,+\infty \right[ $, and $\phi \left( t\right) =t^{2s\alpha }$ for all $t$ $\in \left[ 1,+\infty 
\right[ $.\\ It is obvious that $\theta \in \Theta$ and  $\phi \in \Phi$. We prove that $T$ is a $\theta -\phi $- Kannan-type contraction.\\ 
\begin{align*}
\theta \left(s^{2} d\left( Tx,Ty\right) \right)&=e^{\displaystyle s^{2}d\left( Tx,Ty\right)}\\
&\leq e^{\displaystyle \alpha \left( d\left( Tx,x\right) +d\left( y,Ty\right) \right)}\\
&=\displaystyle {e}^{\displaystyle 2s\alpha \left( \frac{d\left( Tx,x\right) +d\left( y,Ty\right) }{2s}\right) }\\
&=\left[ e^{\displaystyle \left( \frac{d\left( Tx,x\right) +d\left( y,Ty\right) }{2s}\right) }\right] ^{2s\alpha }\\
&=\phi \left[ \theta  \left( \frac{d\left( Tx,x\right) +d\left(y,Ty\right) }{2s} \right) \right].
\end{align*}
Therefore, from Theorem 3.3, $T$ has a unique fixed point $x\in X$\\
\end{proof}
\begin{corollary}
Let $\left( X,d\right) $ be a $\left( \alpha ,\eta \right) -$complete rectangular $b$-metric space and let $\alpha ,\eta :X\times X\rightarrow \left[ 0,+\infty \right[ $ be two functions. Let $T:X\times X\rightarrow X$ be a self mapping satisfying following conditions:\\
\item[(i)] $T$ is a triangular $\left( \alpha ,\eta \right) -$admissible mapping;\\
\item[(ii)] $T$ is a $\left( \alpha ,\eta \right)-$Reich type mapping,\\
\item[(iii)] $T$ there exists $x_{0}\in X$ such that $\alpha \left(x_{0},Tx_{0}\right) \geq 1$ or $\eta \left( x_{0},Tx_{0}\right) \leq 1;$\\
\item[(iv)] $T$is a $\left( \alpha ,\eta \right) -$continuous.\\

Then T has a fixed point. Moreover, $T$ has a unique fixed point when \\$\alpha \left( x,y\right) \geq 1$ or $\eta \left( x,y\right) \leq 1$ for all $x,y\in X.$\\
\end{corollary}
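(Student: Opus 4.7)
The plan is to mirror the strategy used for Corollary 3.8, reducing the Reich case to the already established Theorem 3.7 for $\theta$–$\phi$–Reich-type contractions via a judicious choice of $\theta$ and $\phi$. First I would set $\theta(t)=e^{t}$ for $t\in\left]0,+\infty\right[$ and $\phi(t)=t^{3s\lambda}$ for $t\in\left[1,+\infty\right[$, where $\lambda$ is the constant appearing in the Reich contraction hypothesis. I would then check that $\theta\in\Theta$ (it is continuous, strictly increasing, with values in $\left]1,\infty\right[$, and $\theta(x_n)\to 1$ iff $x_n\to 0$) and $\phi\in\Phi$ (non-decreasing, continuous on $\left[1,\infty\right[$, and since $3s\lambda<1$, the iterates $\phi^{n}(t)=t^{(3s\lambda)^{n}}\to 1$ for each $t>1$).

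Next, I would translate the Reich-type inequality $s^{2}d(Tx,Ty)\le \lambda\bigl[d(x,y)+d(Tx,x)+d(Ty,y)\bigr]$, valid whenever $\alpha(x,y)\ge 1$ or $\eta(x,y)\le 1$ and $Tx\ne Ty$, into the $\theta$–$\phi$ form. Exponentiating both sides and factoring out $3s$ gives
\begin{align*}
\theta\!\left(s^{2}d(Tx,Ty)\right) &= e^{s^{2}d(Tx,Ty)} \\
&\le e^{\lambda\left(d(x,y)+d(Tx,x)+d(Ty,y)\right)} \\
&= \left[e^{\frac{d(x,y)+d(Tx,x)+d(Ty,y)}{3s}}\right]^{3s\lambda} \\
&= \phi\!\left[\theta\!\left(\tfrac{d(x,y)+d(Tx,x)+d(Ty,y)}{3s}\right)\right],
\end{align*}
which is exactly the definition of $T$ being a $\theta$–$\phi$–Reich-type contraction.

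Having verified this, hypotheses (i), (iii) and (iv) of Theorem 3.7 are inherited directly from the corresponding hypotheses of the present corollary, while (ii) is supplied by the computation above. Invoking Theorem 3.7 therefore yields the existence of a fixed point, and uniqueness under the global admissibility condition $\alpha(x,y)\ge 1$ or $\eta(x,y)\le 1$ for all $x,y\in X$.

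I expect no serious obstacle: the only delicate point is ensuring $3s\lambda<1$ so that $\phi\in\Phi$, and this is guaranteed by the assumption $\lambda\in\left]0,\tfrac{1}{3s}\right[$. The remainder is routine verification of membership in $\Theta$ and $\Phi$ and a direct algebraic manipulation, after which Theorem 3.7 does all the work.
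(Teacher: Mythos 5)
Your proposal is correct and follows essentially the same route as the paper: the same choice $\theta(t)=e^{t}$, $\phi(t)=t^{3s\lambda}$, the same exponentiation of the Reich inequality to exhibit $T$ as a $\theta$--$\phi$--Reich-type contraction, and then an appeal to the already-proved theorem (the paper cites Theorem 3.3 directly, while you route through the Reich-type contraction theorem, which is Theorem 3.8 in the paper's numbering rather than 3.7; either reference works since the latter reduces to the former). Your explicit check that $3s\lambda<1$ guarantees $\phi\in\Phi$ is a detail the paper leaves implicit, and is a welcome addition.
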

\begin{proof}
Let $\theta (t)=$ $e^{t}$ for all t $\in $ $\left] 0,+\infty \right[ $, and $\phi \left( t\right) =t^{3s\lambda }$
for all t $\in \left[ 1,+\infty \right[ $.\\ We prove that T is a $\theta -\phi $- Reich-type contraction. 
\begin{align*}
\theta \left( s^{2}d\left( Tx,Ty\right) \right)& =e^{\displaystyle s^{2}d\left( Tx,Ty\right) }\\
&\leq \displaystyle e^{\displaystyle3\displaystyle  \lambda s \frac{\left( d\left( x,y\right) +d\left( Tx,x\right) +d\left(y,Ty\right) \right) }{3s}}\\
&=\left[\displaystyle e^{\displaystyle  \frac{\left( d\left( x,y\right) +d\left( Tx,x\right)
+d\left( y,Ty\right) \right) }{3s}}\right] ^{3\lambda s }
\end{align*}
\begin{equation*}
=\phi \left[ \theta \left( \left( \frac{\left( d\left( x,y\right) +d\left(
Tx,x\right) +d\left( y,Ty\right) \right) }{3}\right) \right) \right] .
\end{equation*}
Therefore, from Theorem 3.3, $T$ has a unique fixed point $x\in X.$\\
\end{proof}
\begin{example}
	\bigskip Consider the set $X=\left\{ 1,2,3,4\right\} .$ It is easy to check
	that the mapping $d:X\times X\rightarrow \left[ 0,+\infty \right[ $ given by\\
	\item[(i)] $d\left( x,y\right) =d\left( y,x\right) ,$ $d\left(
	x,x\right) =0$ for all $\ x,y\in X,$\\
	\item[(ii)] $d\left( 1,2\right) =\frac{1}{24},$ $d\left( 1,3\right)
	=3,$ $d\left( 1,4\right) =4,$\\
	\item[(iii)] $d\left( 2,3\right) =5,$ $d\left( 2,4\right) =6,$ $%
	d\left( 3,4\right) =18.$\\
	
	Clearly $\left( X,d\right) $ is a rectangular $b-$metric space with parameter $%
	s=2.$\\
	
	Define mapping $T:X\rightarrow X$ and $\alpha ,\eta :X\times X\rightarrow \left[ 0,+\infty \right[ $ by\\
	\begin{equation*}
	\left\lbrace
	\begin{aligned}
	T(1)	&=1\\
	T(2)	&=1\\
	T(3)	&=1\\
	T(4)	&=2
	\end{aligned}
	\right.
	\end{equation*}
	and 
	$$ \alpha \left( x,y\right) =\frac{x+y}{\max \left\{ x,y\right\} },$$ \ $$%
	\eta \left( x,y\right) =\frac{\left\vert x-y\right\vert }{\max \left\{x,y\right\} }.$$\\
	
	Then, $T$ is an $\left( \alpha ,\eta \right) -$continuous triangular $\left(\alpha ,\eta \right) -$admissible mapping.\\
	
	Let $\theta \left( t\right) =\sqrt{t}+1,$ $\phi \left( t\right) =\frac{2t+1}{3}$ and $\beta _{1}=\frac{4}{10},$ $\beta _{2}=\frac{1}{10},$ $\beta _{3}=\frac{3}{10},$ $\beta _{4}=\frac{2}{10}.$\\
	It obvious that $\theta \in \Theta $ and $\phi \in\Phi .$\\
	Evidently, ($\left( \alpha \left( x,y\right) \geq 1\text{ or }\left( x,y\right) \leq 1\right) $ and $d\left( Tx,Ty\right) >0$ are when\\ 
	$\left\{ x,y\right\} =\left\{ 1,4\right\} ,\left\{ x,y\right\} =\left\{2,4\right\} ,$ or $\left\{ x,y\right\} =\left\{ 3,4\right\} $. Consider the
	following four possibilities:\\
	For $x=1,$ $y=4.$ then 
	\begin{equation*}
	\theta \left( s^{2}.d\left( T1,T4\right) \right) =\sqrt{\frac{1}{6}}+1= 1.4,\\
	\end{equation*}
	and%
	\begin{equation*}
	\phi \left( \theta \left( \beta _{1}d\left( 1,4\right) +\beta _{2}d\left(
	1,T1\right) \right) +\beta _{3}d\left( 4,T4\right) +\beta _{4}d\left(
	4,T1\right) \right) \\
	=\phi \left( \theta \left( \frac{21}{5}\right) \right) =2.36\text{,}\\
	\end{equation*}
	then 
	\begin{equation*}
	\theta \left( s^{2}.d\left( T1,T4\right) \right) \leq \phi \left( \theta
	\left( \beta _{1}d\left( 1,4\right) +\beta _{2}d\left( 1,T1\right) \right)
	+\beta _{3}d\left( 4,T4\right) +\beta _{4}d\left( 4,T1\right) \right).\\
	\end{equation*}
	
	For $x=2,$ $y=4.$ then
	
	\begin{equation*}
	\theta \left( s^{2}.d\left( T2,T4\right) \right) =\sqrt{\frac{1}{6}}+1=1.4,\\
	\end{equation*}
	
	and%
	\begin{eqnarray*}
		&&\phi \left( \theta \left( \beta _{1}d\left( 2,4\right) +\beta _{2}d\left(
		2,T2\right) \right) +\beta _{3}d\left( 4,T4\right) +\beta _{4}d\left(
		4,T2\right) \right) \\
		&=&\phi \left( \theta \left( 3.65\right) \right) =2.27\text{,}\\
	\end{eqnarray*}
	
	then 
	\begin{equation*}
	\theta \left( s^{2}.d\left( T2,T4\right) \right) \leq \phi \left( \theta
	\left( \beta _{1}d\left( 2,4\right) +\beta _{2}d\left( 2,T2\right) \right)
	+\beta _{3}d\left( 4,T4\right) +\beta _{4}d\left( 4,T2\right) \right).\\
	\end{equation*}
	
	For $x=3,$ $y=4.$ then
	
	\begin{equation*}
	\theta \left( s^{2}.d\left( T3,T4\right) \right) =\sqrt{\frac{1}{6}}+1=1.4,\\
	\end{equation*}
	
	and
	\begin{equation*}
	\phi \left( \theta \left( \beta _{1}d\left( 3,4\right) +\beta _{2}d\left(
	3,T3\right) \right) +\beta _{3}d\left( 4,T4\right) +\beta _{4}d\left(
	4,T3\right) \right) \\
	=\phi \left( \theta \left( 10.1\right) \right) =3.13\text{,}\\
	\end{equation*}
	then 
	\begin{equation*}
	\theta \left( s^{2}.d\left( T3,T4\right) \right) \leq \phi \left( \theta
	\left( \beta _{1}d\left( 3,4\right) +\beta _{2}d\left( 3,T3\right) \right)
	+\beta _{3}d\left( 4,T4\right) +\beta _{4}d\left( 4,T3\right) \right) .\\
	\end{equation*}
	Hence $  T $ satisfying assumption of Theorem (3.3) and 1 is the unique fixed point of $ T $.\\
\end{example}
\begin{example}
	Let $ X=A\cup B $, where $ A=\lbrace \frac{1}{n}:n\in\lbrace 2,3,4,5,6,7\rbrace \rbrace $ and $ B=\left[1,2 \right]  $. Define $ d:X\times X\rightarrow \left[0,+\infty \right[  $ as follows:
	\begin{equation*}
	\left\lbrace
	\begin{aligned}
	d(x, y) &=d(y, x)\ for \ all \  x,y\in X;\\
	d(x, y) &=0\Leftrightarrow y= x.\\	
	\end{aligned}
	\right.
	\end{equation*}
	and
	\begin{equation*}
	\left\lbrace
	\begin{aligned}		    
	d\left( \frac{1}{2},\frac{1}{3}\right) =d\left( \frac{1}{4},\frac{1}{5}\right) =d\left( \frac{1}{6},\frac{1}{7}\right) 	&=0.05\\
	d\left( \frac{1}{2},\frac{1}{4}\right) =d\left( \frac{1}{3},\frac{1}{7}\right) =d\left( \frac{1}{5},\frac{1}{6}\right) 	&=0.08\\
	d\left( \frac{1}{2},\frac{1}{6}\right) =d\left( \frac{1}{3},\frac{1}{4}\right) =d\left( \frac{1}{5},\frac{1}{7}\right) 	&=0.4\\
	d\left( \frac{1}{2},\frac{1}{5}\right) =d\left( \frac{1}{3},\frac{1}{6}\right) =d\left( \frac{1}{4},\frac{1}{7}\right) 	&=0.24\\
	d\left( \frac{1}{2},\frac{1}{7}\right) =d\left( \frac{1}{3},\frac{1}{5}\right) =d\left( \frac{1}{4},\frac{1}{6}\right) 	&=0.15\\
	d\left( x,y\right) =\left( \vert x-y\vert\right) ^{2} \ otherwise.
	\end{aligned}
	\right.
	\end{equation*}
	Then $ (X,d) $ is a rectangular b-metric space with coefficient s=3. However we have the following:
	\item[1)] $ (X,d) $ is not a  metric space, as $d\left( \frac{1}{5},\frac{1}{7}\right)=0.4>0.29=d\left( \frac{1}{5},\frac{1}{4}\right)+d\left( \frac{1}{4},\frac{1}{7}\right) 	$.
	\item[2)] $ (X,d) $ is not a  b-metric space for s=3, as $d\left( \frac{1}{3},\frac{1}{4}\right)=0.4>0.39=3\left[ d\left( \frac{1}{3},\frac{1}{2}\right)+d\left( \frac{1}{2},\frac{1}{4}\right)\right]  	$.  
	\item[3)] $ (X,d) $ is not a rectangular metric space, as $d\left( \frac{1}{5},\frac{1}{7}\right)=0.4>0.28=d\left( \frac{1}{5},\frac{1}{4}\right)+d\left( \frac{1}{4},\frac{1}{2}\right)+d\left( \frac{1}{2},\frac{1}{7}\right)$.\\ 
	Define mapping $T:X\rightarrow X$ and $\alpha ,\eta :X\times X\rightarrow \left[ 0,+\infty \right[ $ by\\
	\begin{equation*}
	T(x)=\left\lbrace
	\begin{aligned}
	\sqrt[6]{x}	& \ if \ x\in \left[1,2 \right]  \\
	& 1  \ if \ x\in A.\\
	\end{aligned}
	\right.
	\end{equation*}
	and 
	\begin{equation*}
	\alpha(x,y)=\left\lbrace
	\begin{aligned}
	& sinh(x+y)	 \ if \ x,y\in \left[1,2 \right]  \\
	& \frac{1}{e^{x+y}}	\ otherwise,\\
	\end{aligned}
	\right.
	\end{equation*}
	\begin{equation*}
	\eta(x,y)=\left\lbrace
	\begin{aligned}
	&\frac{x+y}{4}	 \ if \ x,y\in \left[1,2 \right]  \\
	& 1+e^{-(x+y)}	 \ otherwise. \\
	\end{aligned}
	\right.
	\end{equation*}
	Then, $T$ is an $\left( \alpha ,\eta \right) -$continuous triangular $\left(\alpha ,\eta \right) -$admissible mapping.\\
	Let $\theta \left( t\right) =\sqrt{t}+1,$ $\phi \left( t\right) =\frac{t+1}{2}$ and taking $\beta _{1}=1,$ $\beta _{2}=\beta _{3}=\beta _{3}=0.$ It obvious that $\theta \in \Theta $ and $\phi \in\Phi .$\\
	Evidently, ($\left( \alpha \left( x,y\right) \geq 1\text{ or }\left( x,y\right) \leq 1\right) $ and $d\left( Tx,Ty\right) >0$ are when $\left\{ x,y\right\}\in \left[1,2 \right]  $ with $x\neq y$.\\
	Consider two cases :\\
	case 1: $ x>y $. 
	$$ \theta(s^{2} d(Tx,Ty)= 3\left( \sqrt[6]{x}-\sqrt[6]{y}\right) +1$$
	and
	$$ \phi\left[ \theta( d(x,y))\right] =\frac{x-y}{2}+1.$$
	On the other hand 
	\begin{align*}
	\theta(s^{2} d(Tx,Ty)- \phi\left[ \theta( d(x,y))\right]& =\frac{6\left( \sqrt[6]{x}-\sqrt[6]{y}\right)-( x-y)}{2}\\
	&=\frac{1}{2}\left(\left( \sqrt[6]{x}-\sqrt[6]{y}\right) \right)\left[  6-\left( \sqrt[6]{x{^5}}+\sqrt[6]{x{^4}y}+\sqrt[6]{x{^3}y{^2}}+\sqrt[6]{x{^2}y{^3}}+\sqrt[6]{xy{^4}}+\sqrt[6]{y{^5}}\right) \right].  
	\end{align*}
	Since $ x,y\in \left[1,2 \right]  $, then
	$$\left[  6-\left( \sqrt[6]{x{^5}}+\sqrt[6]{x{^4}y}+\sqrt[6]{x{^3}y{^2}}+\sqrt[6]{x{^2}y{^3}}+\sqrt[6]{xy{^4}}+\sqrt[6]{y{^5}}\right) \right]\leq 0  .$$
	Which implies that
	\begin{align*}
	\theta(s^{2} d(Tx,Ty) &\leq  \phi\left[ \theta( d(x,y))\right]\\
	&= \phi\left[ \theta(\beta _{1} d(x,y))+\beta _{2} d(x,Tx)+\beta _{3} d(y,Ty)+\beta _{4} d(y,Tx)\right] 
	\end{align*}
	case 2: $ y>x $. 
	$$ \theta(s^{2} d(Tx,Ty)= 3\left( \sqrt[6]{y}-\sqrt[6]{x}\right) +1$$
	and
	$$ \phi\left[ \theta( d(x,y))\right] =\frac{y-x}{2}+1.$$
	Similarly of case 2, we conclude that
	\begin{align*}
	\theta(s^{2} d(Tx,Ty) &\leq \phi\left[ \theta(\beta _{1} d(x,y))+\beta _{2} d(x,Tx)+\beta _{3} d(y,Ty)+\beta _{4} d(y,Tx)\right] 
	\end{align*}
	Hence, the condition $ (3.1) $ is satisfied. Therefore, $ T $ has a unique fixed point $ z=1 $.  
\end{example}
\bibliographystyle{amsplain}

\end{document}